\DeclareMathAlphabet{\mathpzc}{OT1}{pzc}{m}{it}
\newcommand{\R}{\mathbb{R}}
\newcommand{\Z}{\mathbb{Z}}
\newtheorem{theo}{Theorem}[section]
\newtheorem{Conj}[theo]{Conjecture}
\newtheorem{proposition}[theo]{Proposition}
\newtheorem{example}[theo]{Example}
\newtheorem{lemma}[theo]{Lemma}
\def\SS{\mathbb S}
\def\1{\mathbf 1}
\def\k{\mathbf k}
\def\j{\mathbf j}
\def\i{\mathbf i}
\begin{document}
\title{Characterization of some convex curves\\
 on the $3$-sphere}
\author{
Em\'ilia Alves }

\maketitle

\begin{abstract}
In this paper we provide a characterization for a class of convex curves on the $3$-sphere. 
More precisely, using a theorem that represents a locally convex curve on the $3$-sphere as a pair of curves in $\SS^2$, one
of which is locally convex and the other is an immersion, we are capable of completely characterize a class of convex curves on the 3-sphere.

%%A curve $\gamma: [0,1] \rightarrow \SS^3$ of class $C^k$ ($k \geq 3$) is called \emph{locally convex} if its geodesic torsion is positive at every point. 
%%More generally, a curve $\gamma: [0,1] \rightarrow \SS^n$ of class $C^k$ ($k \geq n$) is called \emph{locally convex} if the vectors $\gamma(t), \gamma'(t), \gamma''(t), \cdots, \gamma^{(n)}(t)$ are a positive basis to $\R^{n+1}$ for all $t \in [0,1]$. 
%%The homotopy type of the spaces of locally convex curves on the $n$-sphere with initial and final fixed points in $\mathrm{SO}_{n+1}$ and in $\mathrm{Spin}_{n+1}$ (the universal double cover of $\mathrm{SO}_{n+1}$, for $n \geq 2$) have been discussed for many authors. However the homotopy type of these spaces was only determined in the case $n=2$. In general remain unknown. Results in this direction were recently obtained in the case $n=3$.
%(\cite{AlvSal19}).

%More precisely, using a theorem that caracterizes any locally convex curves in the $3$-sphere as a pair of locally convex curves in the 2-sphere for which such conditions are verified (see Theorem $1.8$ in~\cite{AlvSal19}) we will be able to completely characterizes a class of convex curves in the 3-sphere. 

% for which such conditions are verified

%which states that
%a locally convex curve in S3 can be decomposed as a pair of curves in S2, one
%of which is locally convex and the other is an immersion  

\end{abstract}

\section{Introduction}
A curve $\gamma: [0,1] \rightarrow \SS^n$ of class $C^k$ ($k \geq n$) is called \emph{locally convex} if 
\[ \mathrm{det}(\gamma(t),\gamma'(t),\gamma''(t), \cdots, \gamma^{(n)}(t)) > 0 \] 
for all $t$. 
Therefore, a curve of class $C^k$, for $k \geq 2$, on the $2$-sphere is locally convex if its geodesic curvature is positive at every point. 
Analogously, a curve of class $C^k$, for $k \geq 3$, on the $3$-sphere is locally convex if its geodesic torsion is always positive (see proposition~\ref{lccs3} for a proof).

Given a locally convex curve $\gamma:[0,1] \rightarrow \SS^n$, we associate a \emph{Frenet frame curve} $ \mathcal{F_{\gamma}}:[0,1] \rightarrow \mathrm{SO}_{n+1}$ by applying the Gram-Schmidt orthonormalization to the $(n+1)$-vectors $(\gamma(t),\gamma'(t),\dots,\gamma^{(n)}(t))$.
Given $Q \in \mathrm{SO}_{n+1}$, we denote by ${\mathcal{L}\SS^{n}}(Q)$ the set of all locally convex curves $\gamma:[0,1] \rightarrow \SS^n$ such that $\mathcal{F}_\gamma(0)=I$ and $\mathcal{F}_\gamma(1)=Q$, where $I$ denotes the identity matrix.
%\begin{definition}\label{LSnQ}
%\end{definition}

The study of the spaces of locally convex curves on the $2$-sphere started with J. A. Little in $1970$. In~\cite{Lit70} he proved that the space ${\mathcal{L}\SS^{2}}(I)$ has $3$ connected components: ${\mathcal{L}\SS^{2}}(\mathbf{1}),  {\mathcal{L}\SS^{2}}(-\mathbf{1})_c$ and ${\mathcal{L}\SS^{2}}(-\mathbf{1})_n$ (this notation will be clarified below). Here we denoted by ${\mathcal{L}\SS^2}(-\mathbf{1})_n$ the component associated with non-convex curves whereas ${\mathcal{L}\SS^2}(-\mathbf{1})_c$ denotes the component of convex curves~\cite{Fen29}, see Figure~\ref{Little}. Notice that this component is contractible~\cite{Ani98}.

\begin{figure}[H]
\centering
\includegraphics[scale=0.3]{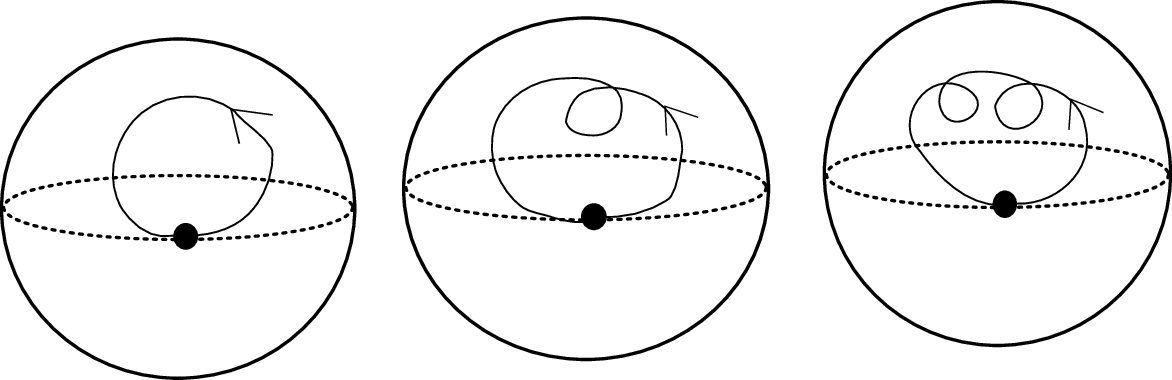}
\caption{Examples of curves in the components $\mathcal{L}\SS^2(-\mathbf{1})_c, \; \mathcal{L}\SS^2(\mathbf{1})$ and $\mathcal{L}\SS^2(\mathbf{-1})_n$, respectively.}
\label{Little}
\end{figure}

%For $n \geq 2$,
%let $\Pi_{n+1} : \mathrm{Spin}_{n+1} \rightarrow \mathrm{SO}_{n+1}$ be the
%universal double cover.

For $n \geq 2$, the universal (double) cover of $\mathrm{SO}_{n+1}$ is the spin group $\mathrm{Spin}_{n+1}$; let 
$\Pi_{n+1} : \mathrm{Spin}_{n+1} \rightarrow \mathrm{SO}_{n+1} $ be the natural projection. 
Let us denote by $\mathbf{1}$ the identity element in $\mathrm{Spin}_{n+1}$, and by $-\mathbf{1}$ the unique non-trivial element in $\mathrm{Spin}_{n+1}$ such that $\Pi_{n+1}(-\mathbf{1})=I$. 
Therefore, the Frenet frame curve $\mathcal{F_{\gamma}}:[0,1] \rightarrow \mathrm{SO}_{n+1}$ can be uniquely lifted to a continuous curve $\tilde{\mathcal{F}_{\gamma}}:[0,1] \rightarrow \mathrm{Spin}_{n+1}$ such that $\mathcal{F_{\gamma}}=\Pi_{n+1} \circ \tilde{\mathcal{F}_{\gamma}}$ and $\tilde{\mathcal{F}_{\gamma}}(0)=\mathbf{1}$. Given $z \in \mathrm{Spin}_{n+1}$, we denote by
${\mathcal{L}\SS^{n}}(z)$ the set of curves
$\gamma \in {\mathcal{L}\SS^{n}}(\Pi_{n+1}(z))$
for which $\tilde{\mathcal{F}_{\gamma}}(1)=z$.
Obviously, ${\mathcal{L}\SS^{n}}(\Pi_{n+1}(z)) = {\mathcal{L}\SS^{n}}(z)\sqcup {\mathcal{L}\SS^{n}}(-z).$
Notice that $\mathcal{L}\SS^n(z)$ tuns out to be non-empty.
%It turns out that $\mathcal{L}\SS^n(z)$ is always non-empty. 
%\begin{definition}\label{LSnz}
%\end{definition}

%OU AQUI FALAR DA TOPOLOGIA, VARIEDADES DE HILBERT E TAL COMO EM SASHA
%PODE CITAR PARA SASHA QUE E VARIEDADE DE HILEBERT CONTRATIL LEMMA 2.3 (SEM FIXAR JATO FINAL)
The motivation to study these spaces of curves comes from the realm of ordinary differential equations (ODE), since the space of locally convex curves on the $n$-sphere is deeply related to the study of linear ODEs of order $n+1$; see~\cite{Alv16},~\cite{BST03},~\cite{BST06},~\cite{BST009} and~\cite{SalTom05}. As we already mention, J. Little was the precursor of the study of the homotopy type of the spaces of locally convex curves on the $2$-sphere. After him, the study of these spaces in higher dimensional spheres and also related spaces (for example, in the Euclidean space and in the Projective space) regain interested in the nineties; here we mention the work of B. Z. Shapiro, M. Z. Shapiro and B. A. Khesin that determined the number of connected components of those spaces; see~\cite{SS91},~\cite{Sha93},~\cite{KS92} and~\cite{KS99}.
At this time, although the number of connected components of those spaces has 
been completely understood, little information on the cohomology or higher homotopy groups was available, 
even for $n=2$. 

The homotopy type of the spaces of locally convex curves on the $2$-sphere was completely determined with the work~\cite{Sal13} of N. Saldanha, which followed important developments reported in~\cite{Sal09I} and \cite{Sal09II}. 
Today this topic continues to attract the attention of several authors working on a variety of problems not just because of its topological richness, but also due to the number of spillovers it has across several disciplines of Mathematics and applications. These include, but are not limited to, symplectic geometry~\cite{Arn95}, control theory~\cite{RS90} and engineering~\cite{Dub57}.
In spite of the attention the topic has received, the homotopy type of the spaces $\mathcal{L}\SS^n(z)$, $n \geq 3$ and $z \in \mathrm{Spin}_{n+1}$, remains a open problem. 

Recently in~\cite{AlvSal19} the authors present some partial results about the homotopy type of the spaces of locally convex curves on the $3$-sphere. In this paper we resort to some results obtained in~\cite{AlvSal19} as to better understand these spaces of curves and refine the analysis of the homotopy type of $\mathcal{L}\SS^3(z)$, $z \in \mathrm{Spin}_{4}$. For this, we  revisit some concepts and then state our contributions.

%Since then many other authors works in this topic 
%O estudo do tipo homot\'opico das curvas localmente convexas iniciou-se com .... (citar vários autores) mas os avanços eram mais sobre os primeiros grupos de homotopia e etc. O tipo homot\'opico do caso n=2 demorou um tempão a ser completamente entendido, e isso ocorreu em .... com os trabalhos de Saldanha. Desde então muitos outros autores tem trabalhado a cerca de entender o tipo homot\'opico dentre eles Zulhke, Cong, Goulart e etc. Recentemente em tal foram obtidos avanços parciais a cerca do caso n=3. Neste paper vamos usar os resultados deste trabalho a fim de entender melhor esses espaços e espero que isto  ilumine o entendimento do tipo homot\'opico do caso n=3.

It is well known that $\mathrm{Spin}_{4}$ can be identified with $\SS^3 \times \SS^3$.
%(see Subsection~\ref{s21}).
So, given $z=(z_l,z_r) \in \SS^3 \times \SS^3$
(where $l$ and $r$ stand for \emph{left} and \emph{right})
we denote by $\mathcal{L}\SS^3(z_l,z_r)$ the space of locally convex curves in $\SS^3$ with the initial and final lifted Frenet frame respectively $(\mathbf{1},\mathbf{1})$ and $(z_l,z_r),$ i.e.,
\[ \mathcal{L}\SS^3(z_l,z_r)=\{\gamma: [0,1] \rightarrow \SS^3 \; | \, \tilde{\mathcal{F}}_\gamma(0)=(\mathbf{1},\mathbf{1}) \; \text{and} \; \tilde{\mathcal{F}}_\gamma(1)=(z_l,z_r) \}. \] 

In~\cite{AlvSal19} it was proved that every locally convex curve $\gamma \in \mathcal{L}\SS^3(z_l,z_r)$ can be represented as a pair of curves on the $2$-sphere $\gamma_l$ and $\gamma_r$, where $\gamma_l$ is locally convex and $\gamma_r$ is merely an immersion. A locally convex curve in $\SS^3$ is rather hard to describe from a geometrical point of view; and the above result allows us to understand such a curve as a pair of curves in $\SS^2$, a situation where geometrical intuition is easier. 

%We will see later (Proposition~\ref{leftconvex} in Chapter~\ref{chapter8}, \S\ref{s82}) that 
If $\gamma \in \mathcal{L}\SS^3(z_l,z_r)$ is such that its left part $\gamma_l \in \mathcal{L}\SS^2(z_l)$ is convex, then $\gamma$ is convex (Proposition 5.7 in~\cite{AlvSal19}), but in general the converse is false. 

Yet there are still some spaces in which one can characterize completely the convexity of $\gamma$ by merely considering its left part.    

The first space in which we have such a characterization is $\mathcal{L}\SS^3(-\1,\k)$.

\begin{theo}\label{th2}
A curve $\gamma \in \mathcal{L}\SS^3(-\1,\k)$ is convex if and only if its left part $\gamma_l \in \mathcal{L}\SS^2(-\1)$ is convex.
\end{theo}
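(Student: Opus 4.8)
The plan is to treat the two implications separately, since one is already available and the other carries the real content. The implication ``$\gamma_l$ convex $\Rightarrow$ $\gamma$ convex'' is precisely Proposition~5.7 of~\cite{AlvSal19}, valid for every endpoint, so nothing new is needed there. For the reverse implication I would argue by contraposition and assume that $\gamma_l\in\mathcal{L}\SS^2(-\1)$ fails to be convex. By Little's theorem~\cite{Lit70} the space $\mathcal{L}\SS^2(-\1)$ has exactly the two components $\mathcal{L}\SS^2(-\1)_c$ and $\mathcal{L}\SS^2(-\1)_n$, so this assumption means $\gamma_l\in\mathcal{L}\SS^2(-\1)_n$, and I may invoke the defining geometric feature of the non-convex component: there is a great circle of $\SS^2$ meeting $\gamma_l$ in strictly more points than any great circle can meet a convex spherical curve.

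The core of the proof is to transport this witness of non-convexity from $\SS^2$ up to $\SS^3$ through the decomposition. Writing the lifted Frenet frame of $\gamma$ as $(L(t),R(t))\in\SS^3\times\SS^3$, with $\gamma_l$ and $\gamma_r$ the curves on $\SS^2$ canonically associated to $L$ and $R$ in~\cite{AlvSal19}, I would convert the incidence of a great circle $C\subset\SS^2$ with $\gamma_l$ at a parameter $t$ into a linear condition $\langle\gamma(t),v\rangle=0$ defining a great $2$-sphere of $\SS^3$. The aim is to exhibit a great $2$-sphere that meets $\gamma$ in more points than convexity in $\SS^3$ allows, contradicting the convexity of $\gamma$. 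The hypothesis $z_r=\k$ enters decisively at this step: for a general right endpoint the right part $\gamma_r$ is free enough to move the transported intersection points and absorb the excess multiplicity --- which is exactly why the converse is false in general --- whereas the value $\k$ pins the right frame down enough that the extra intersections cannot be cancelled. I would make this precise by computing the Frenet frame of an explicit convex model curve and checking that it terminates at $(-\1,\k)$, which simultaneously certifies that $\mathcal{L}\SS^3(-\1,\k)$ does contain convex curves and singles out $\k$ as the right-hand value compatible with a convex left part.

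I expect the main obstacle to be precisely the control of $\gamma_r$: proving that, for the endpoint $\k$, the transported intersections genuinely persist and are not reshuffled away is the one step that separates this endpoint from a generic one and is responsible for the failure of the statement in general. A secondary difficulty is the careful accounting of multiplicities and transversality when counting intersections of $\gamma$ with a great $2$-sphere, so that tangencies are weighted correctly and the intersection bound characterizing convexity in $\SS^3$ is applied legitimately. Once the transfer and the counting are in place, the resulting great $2$-sphere contradicts the convexity of $\gamma$, which establishes the contrapositive and completes the proof.
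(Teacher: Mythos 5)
Your first implication is fine: quoting Proposition~5.7 of~\cite{AlvSal19} for ``$\gamma_l$ convex $\Rightarrow$ $\gamma$ convex'' is legitimate (the paper records this fact in its introduction), although the paper itself derives even this direction by a different, softer argument. The problem is the converse, which is where you place all the content and where your sketch has a genuine gap. The ``transport'' step --- converting a great circle meeting $\gamma_l$ at a parameter $t$ into a linear condition $\langle\gamma(t),v\rangle=0$ on $\gamma$ itself --- is not available from the decomposition of~\cite{AlvSal19}. That decomposition identifies $\gamma_l$ and $\gamma_r$ through the left and right projections of the lifted Frenet frame in $\mathrm{Spin}_4\simeq\SS^3\times\SS^3$, i.e.\ through the logarithmic derivative and the compatibility conditions on speeds and curvatures; it gives no pointwise incidence correspondence between $\gamma(t)\in\SS^3$ and $\gamma_l(t)\in\SS^2$. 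You yourself flag ``the control of $\gamma_r$'' as the main obstacle and offer no mechanism for it, so the heart of the proof is missing rather than merely technical.

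The paper's actual argument sidesteps intersection counting entirely. By Theorem~\ref{thmani} (Shapiro--Anisov), once one exhibits a single convex curve in each space, both $\mathcal{L}\SS^2(-\1)$ and $\mathcal{L}\SS^3(-\1,\k)$ split into exactly two connected components, convex and non-convex, with the convex component in each case connected. The left-part map $L:\mathcal{L}\SS^3(-\1,\k)\to\mathcal{L}\SS^2(-\1)$, $L(\gamma)=\gamma_l$, is continuous, so it sends each component into a single component of the target; it then suffices to evaluate $L$ on one explicit representative of each component. The curve $\gamma_1^1$ is shown to be convex by an explicit central projection (certifying, as you anticipated, that $\mathcal{L}\SS^3(-\1,\k)$ contains convex curves), and $L(\gamma_1^1)=\sigma_\pi$ is convex; the curve $\gamma_1^5$ meets the hyperplane $\{(x_1,0,0,x_4)\}$ in four interior points, hence is not convex, and $L(\gamma_1^5)=\sigma_\pi^5$ is not convex. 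Connectedness then forces $L(\mathcal{L}\SS^3(-\1,\k)_c)\subset\mathcal{L}\SS^2(-\1)_c$ and $L(\mathcal{L}\SS^3(-\1,\k)_n)\subset\mathcal{L}\SS^2(-\1)_n$, which is both implications at once. If you want to salvage your outline, replace the unproved transport step by this component-counting argument; the only intersection count you then need is the elementary one for the explicit curve $\gamma_1^5$.
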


The second space is $\mathcal{L}\SS^3(\1,-\1)$; however, for this space, we can only give a necessary condition for a curve to be convex, even though we believe that this condition is also sufficient. 
%; in fact we will see later (Proposition~\ref{spaces} in Chapter~\ref{chapter7}, \S\ref{s73}) that $\mathcal{L}\SS^3(\1,-\1)$ is homeomorphic to $\mathcal{L}\SS^3(-\1,\k)$

\begin{theo}\label{th3}
Assume that $\gamma \in \mathcal{L}\SS^3(\1,-\1)$ is convex. Then its left part $\gamma_l \in \mathcal{L}\SS^2(\1)$ is contained in an open hemisphere and its rotation number is equal to $2$.
\end{theo}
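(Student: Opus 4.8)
The plan is to extract everything about $\gamma_l$ from the convexity of $\gamma$ by translating the latter into a statement about how often $\gamma_l$ crosses great circles of $\SS^2$. First I would record what the decomposition theorem of~\cite{AlvSal19} gives for free: since the left endpoint of the lifted Frenet frame is $z_l=\1$, the left part $\gamma_l$ is a closed locally convex curve on $\SS^2$ lying in $\mathcal{L}\SS^2(\1)$. By Little's description of this component, the final lift being $\1$ rather than $-\1$ forces the rotation number to be \emph{even}; in particular, once it is known to be well defined it is automatically $\geq 2$. The theorem therefore reduces to two assertions: that $\gamma_l$ is contained in an open hemisphere, and that its rotation number is at most $2$.

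Next I would translate the convexity of $\gamma$. Writing $\SS^3\subset\H$ and using the quaternionic form of the decomposition, in which the lifted frame is a pair $(q_l,q_r):[0,1]\to\SS^3\times\SS^3$ with $\gamma=q_l\,\overline{q_r}$ and $\gamma_l=q_l\,\i\,\overline{q_l}$, convexity says that for every $v\in\SS^3$ the height function $h_v(t)=\langle\gamma(t),v\rangle$ has at most $3$ zeros. The aim of this step is to convert this whole family of bounds, together with the endpoint condition $q_r(1)=-\1$, into the single clean statement that $\gamma_l$ meets every great circle of $\SS^2$ in at most $4$ points. The difficulty here is genuine: $h_v$ mixes $q_l$ and $q_r$, whereas $\gamma_l$ sees only $q_l$, so the passage must exploit the rigidity coming from $(q_l,q_r)$ being the honest Frenet lift of a locally convex curve and from the prescribed boundary value of $q_r$.

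Granting the intersection bound, I would finish as follows. To see that $\gamma_l$ lies in an open hemisphere, I would argue by contradiction using an extremal great circle: if $\gamma_l$ were contained in no hemisphere, then a great circle supporting a linear functional extremized along $\gamma_l$ (the equator of its convex hull) would have to be crossed more than $4$ times, contradicting the bound, with local convexity ruling out the degenerate tangential meetings. Once $\gamma_l$ is confined to an open hemisphere, gnomonic projection carries it to a closed planar curve of strictly positive curvature and sends great circles to straight lines; its turning number $r$ is then a well-defined positive integer, even by the first step, while the $\leq 4$ bound on line intersections forces $r\leq 2$, since a positively curved closed planar curve of turning number $r$ is met by some line in $2r$ points. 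Hence $r=2$. I expect the main obstacle to be the transfer in the middle paragraph — isolating the great-circle crossings of $\gamma_l$ out of the combined height functions of $\gamma$ — with the hemisphere argument a close second, as deducing global containment from a bound on intersection numbers is exactly where local convexity must be leveraged with care.
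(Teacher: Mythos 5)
Your proposal has two genuine gaps, and the second one sits exactly where the real difficulty of the theorem lives. First, the entire middle step --- converting convexity of $\gamma$ (at most $3$ intersections of $\gamma$ with hyperplanes of $\R^4$) into the claim that $\gamma_l$ meets every great circle of $\SS^2$ in at most $4$ points --- is asserted but not carried out; you yourself flag it as ``genuine'' difficulty and offer no mechanism for isolating $q_l$ from the mixed height functions $h_v(t)=\langle q_l(t)\overline{q_r(t)},v\rangle$. This is not a technical detail to be filled in later: it is essentially the whole content of the theorem, and it is far from clear that the statement you want is even provable by a direct count (the paper never establishes any such intersection bound for $\gamma_l$).

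Second, even granting that bound, your final paragraph does not deliver the conclusion. A \emph{borderline} hemispherical locally convex curve with rotation number $2$ (contained in a closed but in no open hemisphere) touches its equator tangentially; each tangential touch has multiplicity $2$, and two such touches give total multiplicity $4$, which is perfectly compatible with your ``at most $4$'' bound. So local convexity does not ``rule out the degenerate tangential meetings'' --- it permits them --- and your extremal-great-circle argument cannot upgrade ``closed hemisphere'' to ``open hemisphere.'' This borderline case is precisely what the paper's proof is built around: it argues by connectedness (the convex component $\mathcal{L}\SS^3(\1,-\1)_c$ is connected, the explicit convex example $\gamma_1^2$ has left part $\sigma_\pi^2$ which is hemispherical with rotation number $2$, and any path in $L(\mathcal{L}\SS^3(\1,-\1)_c)$ leaving the good set must pass through a borderline hemispherical curve of rotation number $2$ by Lemma~\ref{lem1}), and then excludes the borderline case by a separate, nontrivial computation (Lemma~\ref{lem2}): convexity forces $\mathcal{F}_\beta(t)$ to stay in the Bruhat cell of $A^\top$, yielding a lower-triangular factor $L(t)$ with $L^{-1}L'$ of strictly positive subdiagonal, while the two equator tangencies force $l_{21}(t_i)=-l_{43}(t_i)$ at two distinct times, a contradiction. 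You would need either to supply an argument of comparable strength for the borderline case or to find a correct replacement for it; as written, the proposal does not prove the theorem.
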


The remainder of this work is structured as follows.
Section~\ref{preliminaries} is devoted to some algebraic preliminaries, fundamental in this work. 
In Section~\ref{section3} we present some basic definitions and properties of locally convex curves. 
In Subsection~\ref{s33} we define a large space of curves, the space of generic curves, and properly define the Frenet frame curve associated with a locally convex curve and to a generic curve.
Finally in Subsection~\ref{convexcurves} we define globally convex curves, which are of fundamental importance in the study
of locally convex curves and in this work.
In Section~\ref{examples} we will give some examples of locally convex curves on the $3$-sphere that will be fundamental in the proofs of Theorem~\ref{th2} and Theorem~\ref{th3}: for this, we will review some definitions and results that are contained in~\cite{AlvSal19}.
In Section~\ref{sectionproofs} we finally give the proofs of Theorem~\ref{th2} and Theorem~\ref{th3} that characterizes convexity in the spaces $\mathcal{L}\SS^3(-\1,\k)$ and $\mathcal{L}\SS^3(\1,-\1)$, respectively. 
In Subsection~\ref{s64} there is a precise definition of a curve contained in an open hemisphere and its rotation number.

\noindent{\bf Acknowledgements:} The author is grateful to Nicolau C. Saldanha for helpful conversations and 
to CAPES, CNPq, FAPERJ and PUC-Rio for the financial support. 

\section{Algebraic Preliminaries} \label{preliminaries}

In this section we briefly review some algebraic concepts: first we recall the definition and some properties about the spin groups. Then we review the decomposition of the orthogonal groups $\mathrm{SO}_{n+1}$ and the spin groups $\mathrm{Spin}_{n+1}$ in the (signed) Bruhat cells.

\subsection{Spin Groups}

For $n \geq 2$, the universal (double) cover of $\mathrm{SO}_{n+1}$ is the spin group $\mathrm{Spin}_{n+1}$; let 
$\Pi_{n+1} : \mathrm{Spin}_{n+1} \rightarrow \mathrm{SO}_{n+1} $ be the natural projection. 
The group $\mathrm{Spin}_{n+1}$ is therefore a simply connected Lie group, which is also compact, and it has the same Lie algebra (and hence the same dimension) as $\mathrm{SO}_{n+1}$. 
Let us denote by $\1$ the identity element in $\mathrm{Spin}_{n+1}$, and by $-\mathbf{1}$ the unique non-trivial element in $\mathrm{Spin}_{n+1}$ such that $\Pi_{n+1}(-\1)=I$.

%In this work, we will denoted by $\1 \in \mathrm{Spin}_{n+1}$ the unit element in the group $\mathrm{Spin}_{n+1}$.
%the unit element in the group $\mathrm{Spin}_{n+1}$ will be denoted by $\1 \in \mathrm{Spin}_{n+1}$.  

%For the purpose of this work we will give a brief description of $\mathrm{Spin}_{n+1}$ in the cases $n=2$ and $n=3$. It is well known that $\mathrm{Spin}_3 \simeq \SS^3$ and $\mathrm{Spin}_4 \simeq \SS^3 \times \SS^3$. 
%For more details we refer to the reader Subsection $2.1$ in~\cite{AlvSal19}. 
We will give a brief description of $\mathrm{Spin}_{n+1}$ in the cases $n=2$ and $n=3$ since it will be fundamental in this work. It is well known that $\mathrm{Spin}_3 \simeq \SS^3$ and $\mathrm{Spin}_4 \simeq \SS^3 \times \SS^3$.  

%Let us recall that the algebra of quaternions $\mathbb{H}$ can  

First, let us recall the definition of the algebra of quaternions:
\[ \mathbb{H}:=\{a\1+b\i+c\j+d\k \; | \; (a,b,c,d) \in \R^4\}, \]
where $\1 \in \mathbb{H}$ is the multiplicative unit, and $\i,\j,\k$ satisfy the product rules
$ \i^2=\j^2=\k^2=\i\j\k=-1$. 
As a real vector space, $ \mathbb{H}$ is isomorphic to $\R^4$, hence one can define a Euclidean norm on $\mathbb{H}$ and the set of quaternions with unit norm, $U(\mathbb{H})$, can be naturally identified with $\SS^3$. The space of imaginary quaternions (i.e., of real part $0$) is naturally identified with $\R^3$.

The natural projection $\Pi_3 : \mathrm{Spin}_3 \rightarrow \mathrm{SO}_3$ is given by $\Pi_3(z)h = zh\bar{z}$, for any $h \in \R^3$.
The natural projection $\Pi_4 : \mathrm{Spin}_4 \rightarrow \mathrm{SO}_4$ is given by $\Pi_3(z_l,z_r)q = z_lq\bar{z_r}$, for any $q \in \R^4$. For a matrix-notation approach of these objects we refer the reader to~\cite[Subsection $2.1$]{AlvSal19}.

\subsection{Bruhat cells and the Coxeter-Weyl Group} \label{BruhatDecomp}

We denote by $\mathrm{Up}^+_{n+1}$ the group of upper triangular matrices with positive diagonal entries and by $\mathrm{Up}^1_{n+1} \subset \mathrm{Up}^+_{n+1}$ the subgroup of upper triangular matrices with diagonal entries equal to one.

%\begin{definition}
%\end{definition}

The map $B : \mathrm{Up}^1_{n+1} \times \mathrm{SO}_{n+1} \mapsto \mathrm{SO}_{n+1}$ defined by
$ B(U,Q)=UQU' $
where $U'$ is the unique matrix in $\mathrm{Up}^+_{n+1}$ such that $UQU' \in \mathrm{SO}_{n+1}$ is called the \emph{Bruhat action}. 
The Bruhat action is clearly a group action of $\mathrm{Up}^1_{n+1}$ on $\mathrm{SO}_{n+1}$, 
and we call its finitely many orbits the  \emph{(signed) Bruhat cells}. 
It turns out that, two matrices $Q \in \mathrm{SO}_{n+1}$ and $Q' \in \mathrm{SO}_{n+1}$ belong to the same Bruhat cell if and only if there exist $U$ and $U'$ in $\mathrm{Up}^+_{n+1}$ such that $Q'=UQU'$.
In other words, given $Q \in \mathrm{SO}_{n+1}$ the Bruhat cell of $Q$ is the set of matrices $UQU' \in \mathrm{SO}_{n+1}$, where $U$ and $U'$ belong to $\mathrm{Up}^+_{n+1}$. We denote by $\mathrm{Bru}_Q$ the \emph{(signed) Bruhat cell} of $Q \in \mathrm{SO}_{n+1}$.

%\begin{definition}
%Given $Q \in \mathrm{SO}_{n+1}$, we define the \emph{Bruhat cell} $\mathrm{Bru}_Q$ as the set of matrices $UQU' \in \mathrm{SO}_{n+1}$, where $U$ and $U'$ belong to $\mathrm{Up}^+_{n+1}$. 
%\end{definition}

%In other words, two matrices $Q \in \mathrm{SO}_{n+1}$ and $Q' \in \mathrm{SO}_{n+1}$ belong to the same Bruhat cell if and only if there exist $U$ and $U'$ in $\mathrm{Up}^+_{n+1}$ such that $Q'=UQU'$. It is easy to see that given $Q \in \mathrm{SO}_{n+1}$ and $Q' \in \mathrm{SO}_{n+1}$, $\mathrm{Bru}_Q$ and $\mathrm{Bru}_{Q'}$ are either equal or disjoint. In fact, Bruhat cells can be considered as the orbits of a group action. First observe that if $Q' \in \mathrm{Bru}_Q$, then we can write $Q'=UQU'$ for some $U$ and $U'$ in $\mathrm{Up}^+_{n+1}$ which are in general not unique. But if we further require that $U$ belongs to $\mathrm{Up}^1_{n+1}$, the group of upper triangular matrices with diagonal entries equal to one, then $U'$ is uniquely defined.

%\begin{definition}
%The map $B : \mathrm{Up}^1_{n+1} \times \mathrm{SO}_{n+1} \mapsto \mathrm{SO}_{n+1}$ defined by
%\[ B(U,Q)=UQU' \]
%where $U'$ is the unique matrix in $\mathrm{Up}^+_{n+1}$ such that $UQU' \in \mathrm{SO}_{n+1}$ is called the \emph{Bruhat action}. 
%\end{definition}

%The Bruhat action is clearly a group action of $\mathrm{Up}^1_{n+1}$ on $\mathrm{SO}_{n+1}$, and its orbits coincide with the Bruhat cells. 

Let $\mathrm{B}_{n+1} \subset \mathrm{O}_{n+1}$ the Coxeter-Weyl group of signed permutations matrices, 
so $|\mathrm{B}_{n+1}| = 2^{n+1}(n+1)!$.
Let $\mathrm{B}_{n+1}^+ = \mathrm{B}_{n+1} \cap \mathrm{SO}_{n+1}$ the group of signed permutations matrices of determinant one, so $|\mathrm{B}_{n+1}^+| = 2^{n}(n+1)!$.
Given a signed permutation matrix in $\mathrm{B}_{n+1}^+$ we can drop its entries signs and define a homomorphism from $\mathrm{B}_{n+1}^+$ to the symmetric group $S_{n+1}$.
%It is well-known that
Each Bruhat cell contains a unique signed permutation matrix $P \in \mathrm{B}_{n+1}^+$; it follows that, given any two Bruhat cells, associated with two distinct signed-permutation matrices, they are disjoint. 
Therefore we have the Bruhat decomposition of $\mathrm{SO}_{n+1}$:
\[ \mathrm{SO}_{n+1}=\bigsqcup_{P \in \mathrm{B}_{n+1}^+}\mathrm{Bru}_P, \]
and there are $2^n(n+1)!$ different Bruhat cells. 
%Each Bruhat cell $\mathrm{Bru}_P$ is diffeomorphic to $\R^{\#\mathrm{inv}(P)}$, hence they are open if and only if they have maximal dimension, that is if they correspond to the permutation $\rho$ we previously defined by $\rho(i)=n+2-i$ for all $i \in \{1, \dots, n+1\}$.

Since the group $\mathrm{Up}_{n+1}^1$ is contractible, its Bruhat action on $\mathrm{SO}_{n+1}$ lifts to a Bruhat action on $\mathrm{Spin}_{n+1}$ that, for simplicity, we still denote by $B : \mathrm{Up}_{n+1}^1 \times \mathrm{Spin}_{n+1} \rightarrow \mathrm{Spin}_{n+1}$. 
As before, the Bruhat cells on $\mathrm{Spin}_{n+1}$ are the orbits of the Bruhat action.

Let us denote by $\tilde{\mathrm{B}}_{n+1}^+ = \Pi_{n+1}^{-1}(\mathrm{B}_{n+1}^+) \subset \mathrm{Spin}_{n+1}$, so $|\tilde{\mathrm{B}}_{n+1}^+| = 2^{n+1}(n+1)!$. Let $z \in \mathrm{Spin_{n+1}}$; we define the \emph{Bruhat cell} $\mathrm{Bru}_z$ as the connected component of $\Pi_{n+1}^{-1}(\mathrm{Bru}_{\Pi_{n+1}(z)})$ which contains $z$. 
Obviously $\Pi_{n+1}^{-1}(\mathrm{Bru}_{\Pi_{n+1}(z)}) = \mathrm{Bru}_z \sqcup \mathrm{Bru}_{-z}$, where each set $\mathrm{Bru}_{z}$, $\mathrm{Bru}_{-z}$ is contractible and non empty.

Since the group $\mathrm{Up}_{n+1}^1$ is contractible, its Bruhat action on $\mathrm{SO}_{n+1}$ lifts to a Bruhat action on $\mathrm{Spin}_{n+1}$ that, for simplicity, we still denote by $B : \mathrm{Up}_{n+1}^1 \times \mathrm{Spin}_{n+1} \rightarrow \mathrm{Spin}_{n+1}$. As before, the Bruhat cells on $\mathrm{Spin}_{n+1}$ are the orbits of the Bruhat action.

Therefore the Bruhat decomposition of $\mathrm{SO}_{n+1}$ can be lifted to the universal cover $\Pi_{n+1}: \mathrm{Spin}_{n+1} \rightarrow \mathrm{SO}_{n+1}$ and we have the Bruhat decomposition for $\mathrm{Spin}_{n+1}$:
\[ \mathrm{Spin}_{n+1}=\bigsqcup_{\tilde{P} \in \tilde{B}_{n+1}^+}\mathrm{Bru}_{\tilde{P}}, \]
and there are $2^{n+1}(n+1)!$ disjoint Bruhat cells in $\mathrm{Spin}_{n+1}$.
% Each lifted Bruhat cell $\mathrm{Bru}_{\tilde{P}}$ is still diffeomorphic to $\R^{\#\mathrm{inv}(P)}$, where $P=\Pi_{n+1}(\tilde{P}) \in B_{n+1}^+$. Let us write
%\[ \widetilde{\mathrm{Diag}}_{n+1}^+:=\Pi_{n+1}^{-1}(\mathrm{Diag}_{n+1}^+) \]

\section{Basic Definitions and Properties} \label{section3}
%about our spaces of curves

In this section we gather basic notions and properties on the spaces of curves under analysis.
In what follows, we define a large space of curves (the space of generic curves), then we characterize locally convex curves on the $3$-sphere and finally we define globally convex curves.

\subsection{Frenet frame curves and Generic curves}\label{s33}

%Now to a locally convex curve $\gamma:[0,1] \rightarrow \SS^n$, we associate a smooth curve 
%$ \mathcal{F_{\gamma}}:[0,1] \rightarrow \mathrm{SO}_{n+1} $
%as follows. By assumption, the map
%\[ t \in [0,1] \mapsto (\gamma(t),\gamma'(t),\dots,\gamma^{(n)}(t)) \]
%takes values in $\mathrm{GL}_{n+1}^+$.

Consider a locally convex curve $\gamma:[0,1] \rightarrow \SS^n$.
Applying Gram-Schmidt orthonormalization to the $(n+1)$-vectors $(\gamma(t),\gamma'(t),\dots,\gamma^{(n)}(t))$, 
there exists a unique $\mathcal{F}_\gamma(t) \in \mathrm{SO}_{n+1}$ and $R_\gamma(t) \in \mathrm{Up}_{n+1}^+$ such that
\begin{equation}\label{frenet_gram}
(\gamma(t),\gamma'(t),\dots,\gamma^{(n)}(t))=\mathcal{F_{\gamma}}(t) R_\gamma(t),
\end{equation}
where we recall that $\mathrm{Up}_{n+1}^+$ is the space of upper triangular matrices with positive diagonal entries and real coefficients. 
The curve $\mathcal{F_{\gamma}}:[0,1] \rightarrow \mathrm{SO}_{n+1}$ defined by~\eqref{frenet_gram} is called the \emph{Frenet frame curve} of the locally convex curve $\gamma:[0,1] \rightarrow \SS^n$. 
%\begin{definition}
%\end{definition}

Notice that the definition of Frenet frame curve associated with a locally convex curve $\gamma$ on the $n$-sphere does not depend on the choice of a positive (or orientation preserving) reparametrization: indeed a computation using the chain rule shows that
%: indeed, recall that (proof of point $(i)$ in Proposition~\ref{proplocconv}) 
\[ (\gamma \circ \phi(t),(\gamma \circ \phi)'(t),\dots,(\gamma \circ \phi)^{(n)}(t))=(\gamma(\tau),\gamma'(\tau),\dots,\gamma^{(n)}(\tau))U, \]
where $\tau=\phi(t)$ is a positive reparametrization (i.e., the sign of $\phi'(t)$ is positive), and where $U$ is an upper triangular matrix whose diagonal is given by $(1,\phi'(t), \dots,(\phi'(t))^n)$. So $U \in \mathrm{Up}_{n+1}^+$, which implies that $\mathcal{F_{\gamma \circ \phi}}(t)=\mathcal{F_{\gamma}}(\tau)$. 
%Let us also remark that the Frenet frame curve $\mathcal{F}_\gamma$ uniquely determines the curve $\gamma$.

We denote by ${\mathcal{L}\SS^{n}}$ the set of all locally convex curves $\gamma : [0,1] \rightarrow \SS^{n}$ such that $\mathcal{F}_\gamma(0)=I$. Obviously, ${\mathcal{L}\SS^{n}}(Q) \subset {\mathcal{L}\SS^{n}}$. 

%For $Q \in \mathrm{SO}_{n+1}$, ${\mathcal{L}\SS^{n}}(Q)$ is the set of all locally convex curves $\gamma:[0,1] \rightarrow \SS^n$ such that $\mathcal{F}_\gamma(0)=I$ and $\mathcal{F}_\gamma(1)=Q$.
%\begin{definition}\label{LSn}
%\end{definition}

%\medskip

Many authors already discussed the topological structures of the spaces $\mathcal{L}\SS^n(Q)$.
It is well known that different topological structures give different spaces which are homotopically equivalent.
Therefore, we will consider that our curves are smooth. We notice, however, that even if juxtaposition of curves jeopardizes smoothness, there is no loss of generality in assuming so.
For more about this discussion we refer to the reader~\cite{SS12},~\cite{Sal13} or~\cite{Alv16}.

\medskip

Even though we will be mainly interested in locally convex curves, it will
be useful in the sequel to consider a larger space of curves.

\medskip

A curve $\gamma: [0,1] \rightarrow \SS^n$ of class $C^k$ ($k \geq n$) is called \emph{generic} if 
the vectors $\gamma(t), \gamma'(t),\gamma''(t),\dots,\gamma^{(n-1)}(t)$ are linearly independent for all $t \in [0,1]$.
%Notice that a generic curve in $\SS^2$
%Therefore, a curve of class $C^k$, for $k \geq 2$, on the $2$-sphere is generic if $\gamma'(t) \neq 0$ for all $t \in [0,1]$, i.e., a generic curve in $\SS^2$ is just an immersion.
%its geodesic curvature is positive at every point. 
%Analogously, a curve of class $C^k$, for $k \geq 3$, on the $3$-sphere is generic if its geodesic torsion is always positive %(see proposition~\ref{lccs3} for a proof).
%Now we will introduce a larger space of curves that will have an important role in this work. Let $\gamma$ be a curve in $\SS^n$ of class $C^k$ ($k \geq n$): $\gamma$ is called \emph{generic} if the vectors $\gamma(t), \gamma'(t),\gamma''(t),\dots,\gamma^{(n-1)}(t)$ are linearly independent for all $t \in [0,1]$.
Given $\gamma$ a generic curve on the $n$-sphere we can still define its Frenet frame curve. 
In fact, by applying Gram-Schmidt orthonormalization to the linearly independent $n$-vectors $\gamma(t),\gamma'(t),\dots,\gamma^{(n-1)}(t)$ we obtain $n$ orthonormal vectors $u_0(t),u_1(t), \dots, u_{n-1}(t)$ and then, there is a unique vector $u_n(t)$ for which $u_0(t), u_1(t), \dots, u_{n-1}(t), u_n(t)$ is a positive orthonormal basis. 
We may thus set
\begin{equation}\label{frenet_gram2}
\mathcal{F_{\gamma}}(t)=(u_0(t),u_1(t), \dots, u_{n-1}(t), u_n(t)) \in \mathrm{SO}_{n+1}
\end{equation}
and make the following more general definition. The curve $\mathcal{F_{\gamma}}:[0,1] \rightarrow \mathrm{SO}_{n+1}$ defined by~\eqref{frenet_gram2} is called the \emph{Frenet frame curve} of the generic curve $\gamma:[0,1] \rightarrow \SS^n$. 
Clearly, the latter definition coincides with the former when $\gamma$ is locally convex.

Given $Q \in \mathrm{SO}_{n+1}$, we denote by $\mathcal{G}\SS^n(Q)$ the set of all generic curves $\gamma:[0,1] \rightarrow \SS^{n}$ such that $\mathcal{F_{\gamma}}(0)=I$ and $\mathcal{F_{\gamma}}(1)=Q$. For $z \in \mathrm{Spin}_{n+1}$, we define ${\mathcal{G}\SS^{n}}(z)$ as the subset of ${\mathcal{G}\SS^{n}}(\Pi_{n+1}(z))$ for which $\tilde{\mathcal{F}_{\gamma}}(1)=z$.
Obviously, $\mathcal{L}\SS^n(Q) \subset \mathcal{G}\SS^n(Q)$ and $\mathcal{L}\SS^n(z) \subset \mathcal{G}\SS^n(z)$.

The homotopy type of the spaces $\mathcal{G}\SS^{n}(z)$, $z \in \mathrm{Spin}_{n+1}$, is well-known. Let us define $\Omega \mathrm{Spin}_{n+1}(z)$ to be the space of all continuous curves $\alpha : [0,1] \rightarrow \mathrm{Spin}_{n+1}$ with $\alpha(0)=\mathbf{1}$ and $\alpha(1)=z$. It is well understood that different values of $z \in \mathrm{Spin}_{n+1}$ does not change the space $\Omega \mathrm{Spin}_{n+1}(z)$ up to homeomorphism, therefore we will usually drop $z$ from the notation and write $\Omega \mathrm{Spin}_{n+1}$ instead of $\Omega \mathrm{Spin}_{n+1}(z)$. Follows from the works of Hirsch and Smale (\cite{Hir59} and \cite{Sma59b}) that the \emph{Frenet frame injection} $\tilde{\mathcal{F}}: \mathcal{G}\SS^{n}(z) \rightarrow \Omega \mathrm{Spin}_{n+1}$ defined by $(\tilde{\mathcal{F}}(\gamma))(t) = \tilde{\mathcal{F}}_\gamma(t)$ is a homotopy equivalence (see Subsection $5.2$ of~\cite{AlvSal19} for more on this).

\medskip

Let us look at the special case where $\gamma$ is a generic curve on the $2$-sphere, i.e., $\gamma$ is an immersion.
Let us denote by $\mathbf{t}_\gamma(t)$ the unit tangent vector of $\gamma$ at the point $\gamma(t)$, that is 
$\mathbf{t}_\gamma(t):=\frac{\gamma'(t)}{||\gamma'(t)||} \in \SS^2,$
and by $\mathbf{n}_\gamma(t)$ be the unit normal vector of $\gamma$ at the point $\gamma(t)$, that is 
$ \mathbf{n}_\gamma(t):= \gamma(t) \times \mathbf{t}_\gamma(t) $
where $\times$ is the cross-product in $\R^3$. We then have
$ \mathcal{F_{\gamma}}(t)=(\gamma(t),\mathbf{t}_\gamma(t),\mathbf{n}_\gamma(t)) \in \mathrm{SO}_3 $
where $\mathbf{t}_\gamma(t)$ is the unit tangent and $\mathbf{n}_\gamma(t)$ the unit normal we defined above.  
The geodesic curvature $\kappa_\gamma(t)$ is by definition
$ \kappa_\gamma(t):=\mathbf{t}_\gamma'(t)\cdot\mathbf{n}_\gamma(t) $
where $\cdot$ is the Euclidean inner product. Here's a geometric definition of locally convex curves in $\SS^2$.

\begin{proposition}
A generic curve $\gamma:[0,1] \rightarrow \SS^2$ is locally convex if and only if $\kappa_\gamma(t)> 0$ for all $t \in (0,1)$. 
\end{proposition}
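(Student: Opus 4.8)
The plan is to unwind the definition of local convexity for $n=2$ and show it is equivalent to the positivity of geodesic curvature by a direct computation using the orthonormal frame $\mathcal{F}_\gamma(t)=(\gamma(t),\mathbf{t}_\gamma(t),\mathbf{n}_\gamma(t))$. By definition, $\gamma$ is locally convex if and only if $\det(\gamma(t),\gamma'(t),\gamma''(t))>0$ for all $t$; the goal is to relate this determinant to $\kappa_\gamma(t)=\mathbf{t}_\gamma'(t)\cdot\mathbf{n}_\gamma(t)$. First I would invoke Proposition~\ref{proplocconv}(i) to reduce to a convenient parametrization: since local convexity is invariant under positive reparametrization, I may assume $\gamma$ is parametrized by arc length, so that $\gamma'(t)=\mathbf{t}_\gamma(t)$ is a unit vector and $||\gamma'(t)||=1$. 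This removes the denominator in the definition of $\mathbf{t}_\gamma$ and considerably simplifies the derivatives.

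Next I would compute $\det(\gamma,\gamma',\gamma'')$ in this arc-length parametrization. Here $\gamma'=\mathbf{t}_\gamma$, and differentiating once more gives $\gamma''=\mathbf{t}_\gamma'$. The key step is to decompose $\mathbf{t}_\gamma'(t)$ in the orthonormal basis $(\gamma(t),\mathbf{t}_\gamma(t),\mathbf{n}_\gamma(t))$: writing $\mathbf{t}_\gamma'=a\,\gamma+b\,\mathbf{t}_\gamma+c\,\mathbf{n}_\gamma$, orthonormality forces $b=\mathbf{t}_\gamma'\cdot\mathbf{t}_\gamma=\tfrac12(\mathbf{t}_\gamma\cdot\mathbf{t}_\gamma)'=0$, the coefficient along $\mathbf{n}_\gamma$ is exactly $c=\mathbf{t}_\gamma'\cdot\mathbf{n}_\gamma=\kappa_\gamma$, and the coefficient along $\gamma$ is $a=\mathbf{t}_\gamma'\cdot\gamma=-\mathbf{t}_\gamma\cdot\gamma'=-\mathbf{t}_\gamma\cdot\mathbf{t}_\gamma=-1$ (differentiating $\gamma\cdot\mathbf{t}_\gamma=\gamma\cdot\gamma'=0$). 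Therefore $\gamma''=\mathbf{t}_\gamma'=-\gamma+\kappa_\gamma\,\mathbf{n}_\gamma$.

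Substituting into the determinant, I would expand
\begin{equation*}
\det(\gamma,\mathbf{t}_\gamma,\mathbf{t}_\gamma')=\det(\gamma,\mathbf{t}_\gamma,-\gamma+\kappa_\gamma\,\mathbf{n}_\gamma)=\kappa_\gamma\det(\gamma,\mathbf{t}_\gamma,\mathbf{n}_\gamma),
\end{equation*}
where the term involving $-\gamma$ drops out because it repeats the first column. Since $(\gamma,\mathbf{t}_\gamma,\mathbf{n}_\gamma)=\mathcal{F}_\gamma(t)\in\mathrm{SO}_3$, its determinant equals $1$, so $\det(\gamma,\gamma',\gamma'')=\kappa_\gamma(t)$ in the arc-length parametrization. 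Hence $\det(\gamma,\gamma',\gamma'')>0$ if and only if $\kappa_\gamma(t)>0$, which establishes the equivalence. For the general (non-arc-length) parametrization I would note that the reparametrization invariance of both sides — local convexity by Proposition~\ref{proplocconv}(i), and the sign of $\kappa_\gamma$ since it is a geometric quantity of the unoriented-speed-independent frame — closes the argument.

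I expect the only mild obstacle to be bookkeeping: ensuring the sign conventions for $\mathbf{n}_\gamma=\gamma\times\mathbf{t}_\gamma$ are consistent with the orientation giving $\det\mathcal{F}_\gamma=+1$, and being careful that the vanishing coefficient $b$ and the identity $a=-1$ are justified by differentiating the constraints $\mathbf{t}_\gamma\cdot\mathbf{t}_\gamma=1$ and $\gamma\cdot\mathbf{t}_\gamma=0$. None of these steps is deep; the substance of the proof is the single expansion above, and the reparametrization invariance already proved in Proposition~\ref{proplocconv} handles the reduction cleanly.
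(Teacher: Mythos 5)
Your argument is correct: the reduction to arc length via Proposition~\ref{proplocconv}(i), the expansion $\gamma''=\mathbf{t}_\gamma'=-\gamma+\kappa_\gamma\,\mathbf{n}_\gamma$ obtained by differentiating the constraints $\gamma\cdot\gamma=1$, $\gamma\cdot\mathbf{t}_\gamma=0$, $\mathbf{t}_\gamma\cdot\mathbf{t}_\gamma=1$, and the conclusion $\det(\gamma,\gamma',\gamma'')=\kappa_\gamma\det\mathcal{F}_\gamma=\kappa_\gamma$ are all sound. The paper itself does not give a proof here but only cites Proposition 18 of~\cite{Alv16}; your computation is, however, exactly the two-dimensional instance of the method the paper does carry out in full for Proposition~\ref{lccs3} on $\SS^3$ (arc-length reduction, decomposition of the higher derivatives in the Frenet frame, and reading off the determinant of the resulting upper triangular factor $R_\gamma$), so it is entirely in the spirit of the text and fills in the omitted argument.
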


\begin{proof}
For a proof see Proposition 18 in~\cite{Alv16}.
%Without loss of generality, we may assume that $\gamma$ is parametrized by arc-length. Then $\mathbf{t}_\gamma(t)=\gamma'(t)$, and since $||\gamma(t)||=1$, $\gamma'(t)$ is orthogonal to $\gamma(t)$. Moreover, $\mathbf{n}_\gamma(t)= \gamma(t) \times \gamma'(t)$ and $\kappa_\gamma(t)=\gamma''(t)\cdot\mathbf{n}_\gamma(t)$. A simple computation gives
%\[ \gamma''(t)=-\gamma(t)+\kappa_\gamma(t)\mathbf{n}_\gamma(t) \]
%and hence we have the equality
%\[ (\gamma(t),\gamma'(t),\gamma''(t))=\mathcal{F}_\gamma(t)R_\gamma(t) \]
%with
%\[ 
%R_\gamma(t)=
%\begin{pmatrix}
%1 & 0 & -1 \\
%0 & 1 & 0 \\
%0 & 0 & \kappa_\gamma(t)
%\end{pmatrix}. \]
%Since $\mathcal{F}_\gamma(t)$ has determinant $1$, we have
%\[ \mathrm{det}(\gamma(t),\gamma'(t),\gamma''(t))=\mathrm{det}R_\gamma(t)=\kappa_\gamma(t) \]
%and this proves the statement.
\end{proof}

%\medskip

Let us now consider a generic curve $\gamma$ on the $3$-sphere and let $e_1,e_2,e_3,e_4$ be the canonical basis of $\R^4$. 
%Assume that $\gamma$ is generic, that is $\gamma(t),\gamma'(t),\gamma''(t)$ are linearly independent, so that its Frenet frame $\mathcal{F}_\gamma(t)$ can be defined, and let $e_1,e_2,e_3,e_4$ the canonical basis of $\R^4$. It is clear that
%$\mathcal{F}_\gamma(t)$ can be defined,  
It is clear that
$ \mathcal{F}_\gamma(t)e_1=\gamma(t), \quad \mathcal{F}_\gamma(t)e_2=\mathbf{t}_\gamma(t)=\frac{\gamma'(t)}{||\gamma'(t)||}.  $
We define the unit normal $\mathbf{n}_\gamma(t)$ and binormal $\mathbf{b}_\gamma(t)$ by the formulas
$ \mathbf{n}_\gamma(t)=\mathcal{F}_\gamma(t)e_3, \, \mathbf{b}_\gamma(t)=\mathcal{F}_\gamma(t)e_4  $
so that  
\[ \mathcal{F_{\gamma}}(t)=(\gamma(t),\mathbf{t}_\gamma(t),\mathbf{n}_\gamma(t),\mathbf{b}_\gamma(t)) \in \mathrm{SO}_4. \]
The geodesic curvature $\kappa_\gamma(t)$ is still defined by
$ \kappa_\gamma(t):=\mathbf{t}_\gamma'(t)\cdot\mathbf{n}_\gamma(t) $
but we further define the geodesic torsion $\tau_{\gamma}(t)$ by
$ \tau_\gamma(t):=-\mathbf{b}_\gamma'(t)\cdot\mathbf{n}_\gamma(t). $
It is clear that the geodesic curvature is never zero. We can then characterize locally convex curves in $\SS^3$.

\begin{proposition} \label{lccs3}
A generic curve $\gamma:[0,1] \rightarrow \SS^3$ is locally convex if and only if $\tau_\gamma(t)> 0$ for all $t \in (0,1)$. 
\end{proposition}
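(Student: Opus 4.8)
The plan is to compute the determinant $\det(\gamma(t),\gamma'(t),\gamma''(t),\gamma'''(t))$ directly in the Frenet frame and show that it equals a manifestly positive factor times $\tau_\gamma(t)$. First I would read off the Frenet system for the moving frame $\mathcal{F}_\gamma=(\gamma,\mathbf{t}_\gamma,\mathbf{n}_\gamma,\mathbf{b}_\gamma)$. Since $\mathcal{F}_\gamma(t)\in\mathrm{SO}_4$, the matrix $\Lambda:=\mathcal{F}_\gamma^{T}\mathcal{F}_\gamma'$ is antisymmetric; and because the first three frame vectors $u_0,u_1,u_2$ are obtained by Gram-Schmidt from $\gamma,\gamma',\gamma''$, each derivative $u_j'$ lies in $\operatorname{span}(u_0,\dots,u_{j+1})$, so $\Lambda$ is tridiagonal. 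Using $\gamma'=\|\gamma'\|\mathbf{t}_\gamma$ together with the definitions $\kappa_\gamma=\mathbf{t}_\gamma'\cdot\mathbf{n}_\gamma$ and $\tau_\gamma=-\mathbf{b}_\gamma'\cdot\mathbf{n}_\gamma$, its non-zero entries are $\Lambda_{10}=\|\gamma'\|$, $\Lambda_{21}=\kappa_\gamma$ and $\Lambda_{32}=\tau_\gamma$ (together with their antisymmetric counterparts), which is to say $u_0'=\|\gamma'\|u_1$, $u_1'=-\|\gamma'\|u_0+\kappa_\gamma u_2$ and $u_2'=-\kappa_\gamma u_1+\tau_\gamma u_3$.

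Next I would change basis: writing $C:=\mathcal{F}_\gamma^{T}(\gamma,\gamma',\gamma'',\gamma''')$, orthogonality gives $\det(\gamma,\gamma',\gamma'',\gamma''')=\det C$, so it suffices to understand the entries $c_{jk}=u_j\cdot\gamma^{(k)}$. The decisive structural observation is that $\gamma^{(k)}\in\operatorname{span}(u_0,\dots,u_k)$ for each $k$, so $C$ is upper triangular and $\det C=c_{00}c_{11}c_{22}c_{33}$. Differentiating $\gamma=u_0$ repeatedly and substituting the Frenet relations above then yields the diagonal entries $c_{00}=1$, $c_{11}=\|\gamma'\|$, $c_{22}=\|\gamma'\|\kappa_\gamma$ and, crucially, $c_{33}=\|\gamma'\|\kappa_\gamma\tau_\gamma$ --- the last torsion factor being exactly the $u_3$-component picked up when the $u_2$-term of $\gamma''$ is differentiated, since no other term contributes to $u_3$.

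Combining these gives the clean identity $\det(\gamma,\gamma',\gamma'',\gamma''')=\|\gamma'\|^{3}\kappa_\gamma^{2}\,\tau_\gamma$. Since $\gamma$ is generic it is an immersion, so $\|\gamma'\|>0$; and the linear independence of $\gamma,\gamma',\gamma''$ forces the Gram-Schmidt coefficient $c_{22}=\|\gamma'\|\kappa_\gamma$ to be nonzero (indeed positive), so $\kappa_\gamma\neq 0$. Hence $\|\gamma'\|^{3}\kappa_\gamma^{2}>0$ throughout, and the determinant is positive for all $t$ if and only if $\tau_\gamma(t)>0$ for all $t$, which is precisely the claim. The only real obstacle is the bookkeeping: verifying the tridiagonal form of $\Lambda$ and carefully tracking the single $u_3$-component through one extra differentiation to land on the factor $\tau_\gamma$ in $c_{33}$; everything else is routine linear algebra.
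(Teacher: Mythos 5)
Your proof is correct and follows essentially the same route as the paper: both factor $(\gamma,\gamma',\gamma'',\gamma''')=\mathcal{F}_\gamma(t)R(t)$ with $R(t)$ upper triangular and read off the determinant as the product of the diagonal entries, obtaining a positive multiple of $\kappa_\gamma(t)^2\tau_\gamma(t)$. The only cosmetic difference is that the paper first normalizes to arc-length parametrization, which removes the $\|\gamma'(t)\|$ factors you carry through the computation.
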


\begin{proof}
Without loss of generality, we may assume that $\gamma$ is parametrized by arc-length. Then, as before, $\mathbf{t}_\gamma(t)=\gamma'(t)$ and $\kappa_\gamma(t)=\gamma''(t)\cdot\mathbf{n}_\gamma(t)$. Also
$ \gamma''(t)=-\gamma(t)+\kappa_\gamma(t)\mathbf{n}_\gamma(t) $
and hence
$ \gamma'''(t)=-\gamma'(t)+\kappa_\gamma(t)'\mathbf{n}_\gamma(t)+\kappa_\gamma(t)\mathbf{n}_\gamma'(t). $
Since $\mathbf{b}_\gamma(t)\cdot \mathbf{n}_\gamma(t)=0$, we have
$ \tau_\gamma(t)=-\mathbf{b}_\gamma'(t)\cdot \mathbf{n}_\gamma(t)=\mathbf{b}_\gamma(t)\cdot \mathbf{n}'_\gamma(t). $
One then easily computes
\[ \gamma'''(t)\cdot\gamma(t)=0, \]
\[ \gamma'''(t)\cdot\gamma'(t)=-1-\kappa_\gamma(t)^2, \]
\[ \gamma'''(t)\cdot\mathbf{n}_\gamma(t)=\kappa_\gamma'(t), \]
\[ \gamma'''(t)\cdot\mathbf{b}_\gamma(t)=\kappa_\gamma(t)\tau_\gamma(t). \]
So we have the equality
$ (\gamma(t),\gamma'(t),\gamma''(t),\gamma'''(t))=\mathcal{F}_\gamma(t)R_\gamma(t) $
with
\[ 
R_\gamma(t)=
\begin{pmatrix}
1 & 0 & -1 & 0 \\
0 & 1 & 0 & -1-\kappa_\gamma(t)^2\\
0 & 0 & \kappa_\gamma(t) & \kappa_\gamma'(t) \\
0 & 0 & 0 & \kappa_\gamma(t)\tau_\gamma(t).
\end{pmatrix}. \]
Since $\mathcal{F}_\gamma(t)$ has determinant $1$ and $\kappa_\gamma(t)$ is never zero, we have
\[ \mathrm{det}(\gamma(t),\gamma'(t),\gamma''(t),\gamma'''(t))=\mathrm{det}R_\gamma(t)=\kappa_\gamma(t)^2\tau_\gamma(t) \]
and this proves the statement.
\end{proof}

%%%%%%%%%%%%%%%%%%%%%%%%%%%%%%%%%%%%%%%Jacobian Curves and locally convex curves %%%%%%%%%%%%%%%%%%%%%%%%%%%%%%%%%

We continue collecting preliminary notions and auxiliary results instrumental to our arguments.
Let $ \Gamma : [0,1] \rightarrow \mathrm{SO}_{n+1} $, the logarithmic derivative of the curve $\Gamma$ is defined as
$ \Lambda(t)=(\Gamma(t))^{-1}\Gamma'(t) $, that is,  $\Gamma'(t)=\Gamma(t)\Lambda(t). $
Notice that  $\Lambda$ belongs to the Lie algebra of $\mathrm{SO}_{n+1}$. 
Therefore $\Lambda(t)$ is automatically a skew-symmetric matrix for all $t \in [0,1]$.

Let $\mathfrak{J} \subset \mathfrak{so}_{n+1}$ be the set of Jacobi matrices, i.e., $\mathfrak{J}$ is the set of tridiagonal skew-symmetric matrices with positive subdiagonal entries, that is, matrices of the form
\[ \begin{pmatrix}
0 & -c_1 & 0 & \ldots & 0 \\
c_1 & 0 & -c_2 &  & 0 \\
 & \ddots & \ddots & \ddots &  \\
0 &  & c_{n-1} & 0 & -c_n  \\
0 &  &  0 & c_n & 0
\end{pmatrix}, \quad c_1>0, \dots, c_n>0. \]

We are interested in the following definition in order to characterize the Frenet frame curves associated with a locally convex curves. Let us call a curve $\Gamma : [0,1] \rightarrow \mathrm{SO}_{n+1}$ \emph{Jacobian} if its logarithmic derivative $\Lambda(t)=(\Gamma(t))^{-1}\Gamma'(t)$ belongs to $\mathfrak{J}$ for all $t \in [0,1]$. 
%\begin{definition}\label{jacobian}
%\end{definition}

In~\cite{SS12} was introduced a homeomorphism between locally convex curves in $\mathcal{L}\SS^n$ and Jacobian curves starting at the identity, more precisely:
% Lemma tal em tal concerns and proved the 

\begin{proposition}\label{propjacobian}
Let $\Gamma : [0,1] \rightarrow \mathrm{SO}_{n+1}$ be a smooth curve with $\Gamma(0)=I$. Then $\Gamma$ is Jacobian if and only if there exists $\gamma \in \mathcal{L}\SS^n$ such that $\mathcal{F}_\gamma=\Gamma$.
\end{proposition}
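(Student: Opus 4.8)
The plan is to prove both implications by exploiting the Gram--Schmidt factorization $(\gamma,\gamma',\dots,\gamma^{(n)})=\mathcal F_\gamma R_\gamma$ from~\eqref{frenet_gram} and differentiating it, so that the tridiagonal structure of the logarithmic derivative $\Lambda=\Gamma^{-1}\Gamma'$ can be read off directly. Throughout I write $u_0,\dots,u_n$ for the columns of $\Gamma$, so that $\Lambda_{ij}=u_i\cdot u_j'$ because $\Gamma^{-1}=\Gamma^{\top}$; recall also that skew-symmetry of $\Lambda$ is automatic, so only the strictly lower-triangular part must be controlled.

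For the direction ``$\gamma$ locally convex with $\mathcal F_\gamma=\Gamma$ $\Rightarrow$ $\Gamma$ Jacobian'', I would first record that Gram--Schmidt expresses each $u_j$ as a linear combination of $\gamma,\gamma',\dots,\gamma^{(j)}$, whence $u_j'$ is a combination of $\gamma',\dots,\gamma^{(j+1)}$ and therefore lies in $\operatorname{span}(u_0,\dots,u_{j+1})$. This immediately gives $\Lambda_{ij}=u_i\cdot u_j'=0$ for $i\ge j+2$, so $\Lambda$ has no entries below the subdiagonal; by skew-symmetry it is tridiagonal, and the diagonal vanishes since $|u_j|\equiv 1$. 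It then remains to compute the subdiagonal. Writing $\gamma^{(j)}=\sum_{i\le j}(R_\gamma)_{ij}u_i$ and solving for $u_j$, the only contribution to the $u_{j+1}$-component of $u_j'$ comes from the top term of $\gamma^{(j+1)}$, yielding $\Lambda_{j+1,j}=(R_\gamma)_{j+1,j+1}/(R_\gamma)_{jj}$; this is positive because all diagonal entries of $R_\gamma\in\mathrm{Up}^+_{n+1}$ are positive, the positivity of the last one being exactly the local convexity hypothesis $\det(\gamma,\dots,\gamma^{(n)})>0$. Hence $\Lambda\in\mathfrak J$.

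For the converse, given a Jacobian $\Gamma$ with $\Gamma(0)=I$, I would take the natural candidate $\gamma:=\Gamma e_1=u_0$, which maps into $\SS^n$ since $\Gamma\in\mathrm{SO}_{n+1}$. From $\Gamma'=\Gamma\Lambda$ and the tridiagonal form of $\Lambda$ one obtains the Frenet-type relations $u_j'=-c_j u_{j-1}+c_{j+1}u_{j+1}$ (with the conventions $c_0=c_{n+1}=0$), and an induction shows $\gamma^{(k)}=\sum_{j\le k}a_{kj}u_j$ with leading coefficient $a_{kk}=c_1c_2\cdots c_k>0$. Thus $(\gamma,\gamma',\dots,\gamma^{(n)})=\Gamma A$ with $A=(a_{kj})$ upper triangular and positive diagonal, so $\det(\gamma,\dots,\gamma^{(n)})=\det A>0$ (local convexity) and, by uniqueness of the $\mathrm{SO}_{n+1}\cdot\mathrm{Up}^+_{n+1}$ factorization, $\mathcal F_\gamma=\Gamma$ and $R_\gamma=A$. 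Since $\mathcal F_\gamma(0)=\Gamma(0)=I$, this places $\gamma\in\mathcal L\SS^n$.

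The routine parts are the two matrix computations; the step I expect to require the most care is the bookkeeping that simultaneously pins down the \emph{exact} tridiagonal shape and the \emph{sign} of the subdiagonal. In particular one must verify that the only surviving contribution to $\Lambda_{j+1,j}$ (respectively to the leading coefficient $a_{kk}$) is the positive one, checking that all lower-order Gram--Schmidt corrections genuinely drop out; and one must make sure the positivity at the last index $j=n-1$ is \emph{not} automatic from genericity alone but is supplied precisely by local convexity, which is what makes the equivalence sharp.
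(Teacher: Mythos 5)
Your proof is correct and complete; note that the paper itself gives no proof of this proposition, deferring entirely to Lemma~2.1 of~\cite{SS12}, and your two-way computation (reading the tridiagonal shape and the positivity of the subdiagonal of $\Lambda=\Gamma^{-1}\Gamma'$ off the Gram--Schmidt factorization, and conversely setting $\gamma=\Gamma e_1$ and showing $(\gamma,\gamma',\dots,\gamma^{(n)})=\Gamma A$ with $A$ upper triangular of diagonal $1,c_1,c_1c_2,\dots,c_1\cdots c_n$) is precisely the standard argument carried out in that reference. The only cosmetic slip is that $u_j'$ is a combination of $\gamma,\gamma',\dots,\gamma^{(j+1)}$ rather than of $\gamma',\dots,\gamma^{(j+1)}$ (the differentiated coefficient of $\gamma$ survives), which changes nothing since the conclusion $u_j'\in\operatorname{span}(u_0,\dots,u_{j+1})$ still holds.
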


The Lemma 2.1 in~\cite{SS12} gives us a correspondence as follows: 
given $\Gamma$ a Jacobian curve with $\Gamma(0)=I$, then if we define $\gamma_\Gamma$ by setting $\gamma_\Gamma(t)=\Gamma(t)e_1$ then $\gamma_\Gamma \in \mathcal{L}\SS^n$ and conversely, given $\gamma \in \mathcal{L}\SS^n$, its Frenet frame curve is a Jacobian curve.
Notice that the Frenet frame curve $\mathcal{F}_\gamma$ uniquely determines the curve $\gamma$.

%This is exactly the content of Lemma $2.1$ in~\cite{SS12}, to which we refer for a proof. Hence there is a one-to-one correspondence between locally convex curves in $\mathcal{L}\SS^n$ and Jacobian curves starting at the identity: if $\gamma \in \mathcal{L}\SS^n$, its Frenet frame curve is such a Jacobian curve, and conversely, if $\Gamma$ is a Jacobian curve with $\Gamma(0)=I$, then if we define $\gamma_\Gamma$ by setting $\gamma_\Gamma(t)=\Gamma(t)e_1$ then $\gamma_\Gamma \in \mathcal{L}\SS^n$. 

%When $\Gamma=\mathcal{F}_\gamma$ is the Frenet frame curve of a locally convex curve, its logarithmic derivative $\Lambda(t)$ is not an arbitrary skew-symmetric matrix. For instance, if $\gamma : [0,1] \rightarrow \SS^2$ is locally convex, then 
%\[ \mathcal{F_{\gamma}}(t)=(\gamma(t),\mathbf{t}_\gamma(t),\mathbf{n}_\gamma(t)) \in \mathrm{SO}_3 \]
%and by simple computations one obtains
%\begin{equation}\label{logderives2}
%\Lambda_\gamma(t)=(\mathcal{F}_\gamma(t))^{-1}\mathcal{F}_\gamma'(t)=
%\begin{pmatrix}
%0 & -||\gamma'(t)|| & 0 \\
%||\gamma'(t)|| & 0 & -||\gamma'(t)||\kappa_\gamma(t) \\
%0 & ||\gamma'(t)||\kappa_\gamma(t) & 0
%\end{pmatrix}.
%\end{equation}
For instance, let us characterize Frenet Frame curves of locally convex curves on the $3$-sphere that will be useful in this work. 
If $\gamma : [0,1] \rightarrow \SS^3$ is locally convex, then 
$ \mathcal{F_{\gamma}}(t)=(\gamma(t),\mathbf{t}_\gamma(t),\mathbf{n}_\gamma(t),\mathbf{b}_\gamma(t)) \in \mathrm{SO}_4 $
and one gets
\begin{equation}\label{logderives3}
\Lambda_\gamma(t)=
\begin{pmatrix}
0 & -||\gamma'(t)|| & 0  & 0 \\
||\gamma'(t)|| & 0 & -||\gamma'(t)||\kappa_\gamma(t) & 0  \\
0 & ||\gamma'(t)||\kappa_\gamma(t) & 0 & -||\gamma'(t)||\tau_\gamma(t) \\
0 & 0 & ||\gamma'(t)||\tau_\gamma(t) & 0
\end{pmatrix}.
\end{equation}

\subsection{Convex curves} \label{convexcurves}

Next we introduce a special class of locally convex curves of fundamental importance in the study of the space of locally convex curves.

Consider $\gamma : [0,1] \rightarrow \SS^n$, a smooth curve, and let $H \subseteq \mathbb{R}^{n+1}$ be a hyperplane. 
Let us call $\gamma$ a \emph{globally convex} curve if the image of $\gamma$ intersects $H$, counting with multiplicity, in at most $n$ points. 
%\begin{definition}\label{defc}
%\end{definition}
This definition requires us to clarify the notion of multiplicity. First, endpoints of the curve are not counted as intersections. So, if there exists $t \in (0,1)$ such that $\gamma(t) \in H$, then the multiplicity of the intersection point $\gamma(t)$ is the smallest integer $k \geq 1$ such that
$ \gamma^{(j)}(t) \in H, \quad 0 \leq j \leq k-1. $ 
Therefore, a multiplicity is one if $\gamma(t) \in H$ but $\gamma'(t) \notin H$, it is two if $\gamma(t) \in H$, $\gamma'(t) \in H$ but $\gamma''(t) \notin H$, and so on.

From the definition it is easy to prove that every globally convex curve is locally convex. 
For ease of presentation, we refer to globally convex curves as convex curves.

%Note that the Proposition~\ref{proplocconv} and Corollary~\ref{corlocconv} are valid if one replaces all instances of ``locally convex'' by ``globally convex'' and the proofs are analogous.

%SE ESCREVER SOBRE CONVEXIDADE E ARNOLD TRANSPOSTO ESCREVER AQUI A OBS QUE IREMOS VER OUTRAS DEFINICOES EQUIVALENTES A ESTA NO DECORRER DO PAPER (OU EM SECAO TAL DESTE PAPER).

Given any $n \geq 2$ and $z \in \mathrm{Spin}_{n+1}$ recall that $\mathcal{L}\SS^n(z)$ is the space of locally convex curves in $\SS^n$ whose initial lifted Frenet frame is $\1$ and the final lifted Frenet Frame is $z$.

%Recall that $\mathcal{L}\SS^n(I)$ is the space of locally convex curves in $\SS^n$ whose initial and final Frenet frame are equal to the identity matrix in $\mathrm{SO}_{n+1}$.

%\begin{theo}[M. Z. Shapiro, \cite{Sha93}]\label{thmsha}
%The space $\mathcal{L}\SS^n(I)$ has $3$ connected components if $n$ is even, and $2$ if $n$ is odd. 
%\end{theo}

%The fact that $\mathcal{L}\SS^n(I)$ has $3$ connected components when $n$ is even is related to the existence of closed convex curves on any even-dimensional sphere (such examples were given in Example~\ref{exclosed}, Chapter~\ref{chapter3}, \S\ref{s32}). Let us recall also the following result.

Convexity is strongly related with the number of connected components of the spaces $\mathcal{L}\SS^n(z)$, $z \in \mathrm{Spin}_{n+1}$. Let us recall also the following result.

\begin{theo}[M. Z. Shapiro, \cite{Sha93}, S. Anisov, \cite{Ani98}]
\label{thmani}
The space $\mathcal{L}\SS^n(z)$ has exactly two connected components if there exist convex curves in $\mathcal{L}\SS^n(z)$, and one otherwise. If $\mathcal{L}\SS^n(z)$ has two connected components, one is made of convex curves, and this component is contractible.   
\end{theo}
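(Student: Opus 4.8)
The plan is to write $\mathcal{L}\SS^n(z) = C \sqcup N$, where $C$ denotes the set of convex curves and $N$ its complement inside $\mathcal{L}\SS^n(z)$, and to treat the two pieces separately: the goal is to show that, whenever it is nonempty, $C$ is a contractible connected component, while $N$ is connected. The statement then follows by counting components. The first step is to prove that $C$ is simultaneously open and closed in $\mathcal{L}\SS^n(z)$, so that it is automatically a union of connected components. For closedness, I would take a sequence of convex curves converging in the relevant $C^n$-topology to some $\gamma \in \mathcal{L}\SS^n(z)$ and argue that $\gamma$ is convex: if some hyperplane $H$ met $\gamma$ in at least $n+1$ points counted with multiplicity, then, since transversal intersections are stable and a point of multiplicity $k$ splits into $k$ nearby counted intersections under a small perturbation, every sufficiently close curve would also meet $H$ in at least $n+1$ points, contradicting convexity of the approximants.

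For openness, the key observation is that a convex curve cannot be $C^n$-approximated by non-convex locally convex curves. Producing one extra hyperplane intersection requires the curve to ``turn back'', that is, to accumulate a definite amount of total curvature along a short arc; this forces the higher derivatives to be large, hence the perturbation to be large in the $C^n$-norm. Making this precise, for instance through a characterization of convexity in terms of the lifted Frenet frame $\tilde{\mathcal{F}}_\gamma$ remaining within a fixed configuration of Bruhat cells, shows that convexity is a stable (open) condition among locally convex curves. Combined with closedness, this makes $C$ a union of connected components.

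The second step, which is Anisov's contribution, is the contractibility of $C$. Here I would exhibit an explicit deformation retraction of $C$ onto a single standard convex curve. The idea is to put convex curves into a canonical normal form: a convex curve admits a distinguished parametrization (for example, as a controlled perturbation of, or a graph over, a fixed reference convex curve such as a suitably scaled moment curve), and the data describing it---essentially its sequence of generalized curvatures---ranges over a set that one checks to be convex. Linearly contracting this data to that of the reference curve yields the retraction; the one thing to verify is that convexity, together with the boundary conditions $\tilde{\mathcal{F}}_\gamma(0)=\1$ and $\tilde{\mathcal{F}}_\gamma(1)=z$, is preserved all along the deformation. Together with the clopen property, this shows that whenever $C \neq \emptyset$ it is exactly one connected component, and a contractible one.

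It remains to prove that $N$ is connected, and that the whole space is connected when $C = \emptyset$, which is the content of Shapiro's component count. The strategy is a normal-form argument driven by a loop-manipulation lemma: inserting a small pair of loops into a locally convex curve does not change its connected component, and a non-convex curve has enough total turning to absorb and slide such loops, so that any two curves in $N$ can be deformed to a common representative. Convex curves are precisely those too ``tight'' to admit this move, which is the geometric reason they split off into their own component. I expect the two genuinely hard steps to be the contractibility retraction, where keeping \emph{global} convexity under the deformation is delicate, and the connectivity of $N$, whose loop-manipulation normal form rests on the h-principle machinery of Hirsch--Smale (\cite{Hir59}, \cite{Sma59b}) and its refinements; by comparison the clopen argument is routine. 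Counting components then yields two when convex curves exist and one otherwise, completing the proof.
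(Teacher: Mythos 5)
The paper offers no proof of this statement: it is quoted as a theorem of M.~Z.~Shapiro \cite{Sha93} (the count of components) and S.~Anisov \cite{Ani98} (contractibility of the convex component), so the comparison can only be with those references. Your decomposition $\mathcal{L}\SS^n(z)=C\sqcup N$ and the division of labor (clopen, then contractibility of $C$, then connectedness of $N$) is the right architecture, but the one step you call routine is wrong as stated. To prove closedness of $C$ you assert that an intersection of $\gamma$ with $H$ of multiplicity $k$ ``splits into $k$ nearby counted intersections under a small perturbation,'' so that a non-convex limit would force nearby convex approximants to be non-convex. The semicontinuity runs the other way: an intersection of even multiplicity can disappear entirely under an arbitrarily small perturbation (the local model is $t\mapsto t^{2}$ versus $t\mapsto t^{2}+\varepsilon$), so the total multiplicity of $\gamma_{j}\cap H$ may be strictly smaller than that of $\gamma\cap H$ even though $\gamma_{j}\to\gamma$ in $C^{n}$. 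What the count-with-multiplicity argument actually yields (via the fact that a function whose $m$-th derivative is nonvanishing on an interval has at most $m$ zeros there, counted with multiplicity) is that the count for the \emph{limit} dominates the $\limsup$ of the counts for the approximants; that gives openness of $C$, modulo the bookkeeping of intersections escaping to the endpoints, which your definition does not count. Closedness of $C$ is the direction that genuinely requires the Bruhat-cell characterization of convexity (the one invoked in the paper's proof of Lemma~\ref{lem2}) or an equivalent device; it cannot be obtained by perturbing a single witnessing hyperplane.

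Beyond this, the two steps you yourself flag as hard are the actual content of the theorem and remain unproved in your sketch. For contractibility, the set of curvature data of convex curves subject to the two-point boundary condition $\tilde{\mathcal{F}}_{\gamma}(0)=\1$, $\tilde{\mathcal{F}}_{\gamma}(1)=z$ is not obviously convex, so ``linearly contract the data'' does not go through without Anisov's actual construction. For the connectedness of $N$, adding and sliding small loops is precisely the substance of \cite{Sha93}; it does not follow from the Hirsch--Smale h-principle, which computes the homotopy type of the larger space $\mathcal{G}\SS^{n}(z)$ of generic curves and says nothing about which loop manipulations can be performed while keeping the curve locally convex.
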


This result highlights the importance of identifying the existence of convex curves in the spaces of locally convex curves. 
It will be critical in what follows.

\section{Examples} \label{examples}

%In this section we will give some examples of locally convex curves in the $3$-sphere that will be fundamental in the proofs of Theorems~\ref{th2} and~\ref{th3}: for this, we will review some definitions and results that are contained in~\cite{AlvSal19}.

In the recent paper~\cite{AlvSal19} the authors produced a homemorphism between the space of $\gamma \in \mathcal{L}\SS^3(z_l,z_r)$ and the space of pairs of curves $(\gamma_l,\gamma_r) \in \mathcal{L}\SS^2(z_l) \times \mathcal{G}\SS^2(z_r)$ for which such conditions are verified, more precisely:
%, the following result was proven in the Subsection 4.1 of~\cite{AlvSal19}. 

%In the recent paper~\cite{AlvSal19} it was proven a theorem that give us a homemorphism between the space $\mathcal{L}\SS^3(z_l,z_r)$ and the space of pairs of curves $(\gamma_l, \gamma_r)$, such that $\gamma_l$ is locally convex with 
%$\mathcal{F}_{\gamma_l}(0)=\1$ and $\mathcal{F}_{\gamma_l}(1)=z_l$; 
%$\gamma_r$ is an immersion with 
%$\mathcal{F}_{\gamma_r}(0)=\1$ and $\mathcal{F}_{\gamma_r}(1)=z_l$ 
%and satisfying the condition
%\begin{equation*}\label{cond}\tag{L}
%||\gamma_l'(t)||=||\gamma_r'(t)||, \quad \kappa_{\gamma_l}(t)>|\kappa_{\gamma_r}(t)|, \quad t \in [0,1].
%\end{equation*}

%\begin{theo}[A. and Saldanha, 2019] \label{th1}
%There exists a homeomorphism between the space $ \mathcal{L}\SS^3(z_l,z_r)$ and the space of pairs of curves  $(\gamma_l,\gamma_r)$, where $\gamma_l$ is locally convex and $\gamma_r$ is an immersion satisfying the condition
%\begin{equation*}\label{cond}\tag{L}
%||\gamma_l'(t)||=||\gamma_r'(t)||, \quad \kappa_{\gamma_l}(t)>|\kappa_{\gamma_r}(t)|, \quad t \in [0,1].
%\end{equation*}
%\end{theo}

%$ \mathcal{L}\SS^2(z_l) \times \mathcal{G}\SS^2(z_r)$ for which such conditions are verified. More precisely: 
%\[ ||\gamma_l'(t)||=||\gamma_r'(t)||, \quad \kappa_{\gamma_l}(t)>|\kappa_{\gamma_r}(t)|, \quad t \in [0,1]. \]

\begin{theo}[Alves and Saldanha, 2019] \label{th1}
There exists a homeomorphism between the space $ \mathcal{L}\SS^3(z_l,z_r)$ and the space of pairs of curves  $(\gamma_l,\gamma_r) \in \mathcal{L}\SS^2(z_l)\times \mathcal{G}\SS^2(z_r)$
satisfying the condition
\begin{equation*}\label{cond}\tag{L}
||\gamma_l'(t)||=||\gamma_r'(t)||, \quad \kappa_{\gamma_l}(t)>|\kappa_{\gamma_r}(t)|, \quad t \in [0,1].
\end{equation*}
\end{theo}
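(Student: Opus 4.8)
The plan is to read the decomposition straight off the logarithmic derivative of the Frenet frame, exploiting the exceptional isomorphism $\mathrm{Spin}_4 \cong \SS^3 \times \SS^3$ at the Lie-algebra level. Given $\gamma \in \mathcal{L}\SS^3(z_l,z_r)$, Proposition~\ref{propjacobian} and \eqref{logderives3} tell us that $\mathcal{F}_\gamma$ is Jacobian with logarithmic derivative $\Lambda_\gamma(t) = -a\,E_{12} - b\,E_{23} - c\,E_{34}$, where $E_{ij}$ denotes the skew-symmetric matrix with $+1$ in entry $(i,j)$ and $a = \|\gamma'\|>0$, $b = \|\gamma'\|\kappa_\gamma>0$, $c = \|\gamma'\|\tau_\gamma>0$. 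Lifting to $\tilde{\mathcal{F}}_\gamma = (z_l(t),z_r(t)) \in \SS^3\times\SS^3$, the first step is to transport $\Lambda_\gamma$ through the isomorphism $\mathfrak{so}_4 \cong \mathfrak{su}_2\oplus\mathfrak{su}_2$ coming from the splitting of the $2$-forms on $\R^4$ into self-dual and anti-self-dual parts, with bases $E_{12}\pm E_{34}$, $E_{13}\mp E_{24}$, $E_{14}\pm E_{23}$. I then define $\gamma_l,\gamma_r:[0,1]\to\SS^2$ as the curves whose lifted Frenet frames are $z_l,z_r$, that is $\gamma_l(t)=\Pi_3(z_l(t))e_1$ and $\gamma_r(t)=\Pi_3(z_r(t))e_1$.

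The second step is the key computation. Decomposing $\Lambda_\gamma$ into the two summands gives
\[ \xi_l = -\tfrac{a+c}{2}(E_{12}+E_{34}) - \tfrac{b}{2}(E_{14}+E_{23}), \qquad \xi_r = -\tfrac{a-c}{2}(E_{12}-E_{34}) - \tfrac{b}{2}(E_{23}-E_{14}), \]
both with vanishing component along the $E_{13}\mp E_{24}$ direction. Passing to $\mathfrak{so}_3$ through $\Pi_3:\mathrm{Spin}_3\to\mathrm{SO}_3$ (which doubles), this vanishing says precisely that the logarithmic derivatives of $z_l,z_r$ are tridiagonal, so by the $\SS^2$ version of Proposition~\ref{propjacobian} the curves $\gamma_l,\gamma_r$ are generic with Jacobi data $\|\gamma_l'\|=\|\gamma_r'\|=b$ and $\|\gamma_l'\|\kappa_{\gamma_l}=a+c$, $\|\gamma_r'\|\kappa_{\gamma_r}=a-c$. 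Since $a,c>0$ we get $\kappa_{\gamma_l}=(a+c)/b>0$, so $\gamma_l$ is locally convex, whereas $\kappa_{\gamma_r}=(a-c)/b$ may have either sign, so $\gamma_r$ is merely an immersion. The equal speeds give the first half of \eqref{cond}, and $\kappa_{\gamma_l}>|\kappa_{\gamma_r}|$ is equivalent to $a+c>|a-c|$, which holds exactly because $\tau_\gamma>0$ forces $ac>0$; this is the second half of \eqref{cond}. The endpoint conditions $\tilde{\mathcal{F}}_\gamma(0)=(\1,\1)$ and $\tilde{\mathcal{F}}_\gamma(1)=(z_l,z_r)$ place $\gamma_l\in\mathcal{L}\SS^2(z_l)$ and $\gamma_r\in\mathcal{G}\SS^2(z_r)$.

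For the inverse I would run this backwards. Given $(\gamma_l,\gamma_r)$ satisfying \eqref{cond}, set $b=\|\gamma_l'\|=\|\gamma_r'\|$ and solve $a=\tfrac{b}{2}(\kappa_{\gamma_l}+\kappa_{\gamma_r})$, $c=\tfrac{b}{2}(\kappa_{\gamma_l}-\kappa_{\gamma_r})$; condition \eqref{cond} is exactly what makes $a>0$ and $c>0$. The crucial point to verify is that reassembling $\xi_l,\xi_r$ (the lifted Jacobi data of $z_l=\tilde{\mathcal{F}}_{\gamma_l}$ and $z_r=\tilde{\mathcal{F}}_{\gamma_r}$) through $\mathfrak{su}_2\oplus\mathfrak{su}_2\to\mathfrak{so}_4$ yields a tridiagonal matrix: a direct expansion shows the off-tridiagonal $E_{13}$ and $E_{24}$ components cancel if and only if both middle components vanish (each curve is individually Frenet) and the two speed components agree, which is once more $\|\gamma_l'\|=\|\gamma_r'\|$. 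Hence $(z_l,z_r)$ is the lift of a Jacobian curve $\mathcal{F}$ in $\mathrm{SO}_4$ with positive subdiagonal, and $\gamma=\mathcal{F}e_1\in\mathcal{L}\SS^3(z_l,z_r)$ by Proposition~\ref{propjacobian}.

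Finally, both assignments $\gamma\mapsto(\gamma_l,\gamma_r)$ and its inverse are continuous, being built from the smooth covering maps $\Pi_3,\Pi_4$, from Gram--Schmidt, and from integration of the continuously varying Jacobi ODE, all continuous in the relevant $C^k$ topologies. I expect the main obstacle to be bookkeeping: fixing the self-dual/anti-self-dual bases compatibly with the quaternionic conventions $\Pi_3(z)h=zh\bar{z}$ and $\Pi_4(z_l,z_r)q=z_lq\bar{z_r}$ and tracking the factor of $2$ from the double cover, so that the clean relations $\|\gamma_l'\|=\|\gamma_r'\|=b$ and $\|\gamma_l'\|\kappa_{\gamma_l}=a+c$, $\|\gamma_r'\|\kappa_{\gamma_r}=a-c$ hold on the nose. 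Once this dictionary is pinned down, the equivalence ``tridiagonal with positive subdiagonal'' $\Leftrightarrow$ \eqref{cond} is immediate and the homeomorphism follows.
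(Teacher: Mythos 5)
The paper does not prove this theorem itself but defers to Subsection 4.1 of \cite{AlvSal19}, and your argument --- splitting the logarithmic derivative of the lifted Frenet frame through $\mathfrak{so}_4\cong\mathfrak{su}_2\oplus\mathfrak{su}_2$ (equivalently, left and right quaternion multiplication) and reading off the speeds and geodesic curvatures of the two projected curves --- is exactly that proof, and your relations $\|\gamma_l'\|=\|\gamma_r'\|=b$, $\|\gamma_l'\|\kappa_{\gamma_l}=a+c$, $\|\gamma_r'\|\kappa_{\gamma_r}=\pm(a-c)$ match the formulas the paper itself quotes from that source in Example~\ref{family1}, up to the sign convention for $\kappa_{\gamma_r}$, which is immaterial since condition (L) only involves $|\kappa_{\gamma_r}|$. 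So the proposal is correct and takes essentially the same route as the cited proof.
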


For the proof we refer to \cite[Subsection 4.1]{AlvSal19}. 
%The proof of this result uses one formulation of the h-principle of Gromov (\cite{Gro86} and~\cite{Eli02}).
The Theorem~\ref{th1} allows us to represents a locally convex curve in $\SS^3$ as a pair of a locally convex curve in $\SS^2$ and an immersion in $\SS^2$, with some compatibility conditions. Hence to produce examples of locally convex curve in $\SS^3$, it is enough to produce examples of such pairs. 
In this section, we want to use this theorem to produce examples in the spaces we are interested in: 
namely, $\mathcal{L}\SS^3(-\1,\k)$ and $\mathcal{L}\SS^3(\1,-\1)$. 
These examples will be fundamental in the proof of Theorem~\ref{th2} and Theorem~\ref{th3}.

In the sequel, for the sake of completeness, we recall the notations and main steps developed 
in the Subsection 4.2 of~\cite{AlvSal19}. 
%, for a real number $0<c \leq 2\pi$,

Let $c$ be a real number such that $0<c \leq 2\pi$. 
Consider $\sigma_c : [0,1] \rightarrow \SS^2$ the unique circle of length $c$, that is 
$||\sigma_c'(t)||=c$, with initial and final Frenet frame equals to the identity. 
Let $\rho \in (0,\pi/2]$ be the radius of curvature and $c=2\pi\sin\rho$; the curve $\sigma_c$ can be given by the following formula
\[ \sigma_c(t)=\cos\rho(\cos\rho,0,\sin\rho)+\sin\rho(\sin\rho\cos(2\pi t), \sin(2\pi t), -\cos\rho\cos(2\pi t)). \] 
The geodesic curvature of the curve $\sigma_c$ is given by $\cot(\rho) \in [0,+\infty)$. 
Note that, for $0<c<2\pi$, $\sigma_c$ is locally convex but also convex, but for $c=2\pi$, this is a meridian curve
\[ \sigma_{2\pi}(t)=(\cos(2\pi t), \sin(2\pi t), 0), \]
which has zero geodesic curvature, so this is just an immersion.
 
All our examples will be constructed as follows. For the left part of our curves, we will use $\sigma_c$ with $c<2\pi$ and iterate it a certain number of times, and for the right part of curves, we will use $\sigma_{2\pi}$ and iterate it a certain number of times. Since the right part will always have zero geodesic curvature, the only restriction so that this pair of curves defines a locally convex curve in $\SS^3$ is the condition that their length should be equal. However, in order to realize different final lifted Frenet frame, we will have to iterate the curve $\sigma_c$ (on the left) and the curve $\sigma_{2\pi}$ (on the right) a different number of times: the equality of length will be achieved by properly choosing $c$ in each case. 
Define the curve $\sigma_c^m$, for $m>0$, as the curve $\sigma_c$ iterated $m$ times, that is $\sigma_c^m(t)=\sigma_c(mt)$, $t \in [0,1]$.
See the Figure~\ref{fig:b} below for an illustration.  
%In the following, $k$ will either be an integer or half an integer. For instance, given any $m \in \N$, then
%\[ \sigma_c^m \in \mathcal{L}\SS^2((-\1)^m), \; 0<c<2\pi, \quad \sigma_{2\pi}^{m} \in \mathcal{G}\SS^2((-\1)^m)  \]
%and
%\[ \sigma_{2\pi}^{m/2} \in \mathcal{G}\SS^2(\k^m). \]
\begin{figure}[H]
\centering
\includegraphics[scale=0.3]{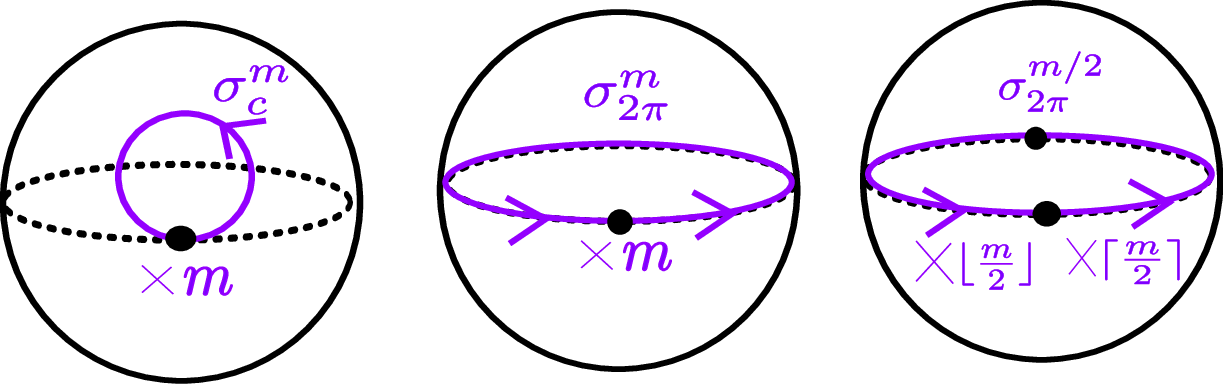}
\caption{The curves $\sigma_c^m$, $\sigma_{2\pi}^m$ and $\sigma_{2\pi}^{m/2}$.}
\label{fig:b}
\end{figure}

\begin{example} \label{family1}
Let us give explicit examples in the spaces $\mathcal{L}\SS^3((-\1)^m,\k^m)$, $m \geq 1$. For $m\equiv 1$ or $2$ modulo $4$, this will give examples in the spaces $\mathcal{L}\SS^3(-\1,\k)$ and $\mathcal{L}\SS^3(\1,-\1)$.
%, $\mathcal{L}\SS^3(-\1,-\k)$ and $\mathcal{L}\SS^3(\1,\1)$.
\end{example}
For $m\equiv 1$ or $2$ modulo $4$, we want to define a curve $\gamma_1^m \in \mathcal{L}\SS^3((-\1)^m,\k^m)$ 
such that its left and right parts are given by
\[ \gamma_{1,l}^m=\sigma_c^m \in \mathcal{L}\SS^2((-\1)^m), \quad \gamma_{1,r}^m =\sigma_{2\pi}^{m/2} \in \mathcal{G}\SS^2(\k^m). \]
To define a pair of curves, we need to choose $0<c<2\pi$ such that
\[ ||(\gamma_{1,l}^{m})'(t)||=||(\sigma_c^{m})'(t)||=cm \]
is equal to 
\[ ||(\gamma_{1,r}^{m})'(t)||=||(\sigma_{2\pi}^{m/2})'(t)||=\pi m. \]
It suffices to choose $c=\pi$ so that both curves have length equal to $\pi m$, then the geodesic curvature of $\gamma_{1,l}^m=\sigma_c^m$ is constantly equal to $\sqrt{3}$. Clearly, the geodesic curvature of $\gamma_{1,r}^m =\sigma_{2\pi}^{m/2}$ is zero.  

Let us now find explicitly the curve $\gamma_1^m$. From Theorem~\ref{th1}, we can compute
\[ ||(\gamma_1^{m})'(t)||=\frac{||(\gamma_{1,l}^m)'(t)||(\kappa_{\gamma_{1,l}}(t)-\kappa_{\gamma_{1,r}}(t))}{2}=\frac{m\pi\sqrt{3}}{2} \]
\[ \kappa_{\gamma_1^m}(t)=\frac{2}{\kappa_{\gamma_{1,l}}(t)-\kappa_{\gamma_{1,r}}(t)}=\frac{2}{\sqrt{3}},\] 
\[\tau_{\gamma_1^m}(t)=\frac{\kappa_{\gamma_{1,l}}(t)+\kappa_{\gamma_{1,r}}(t)}{\kappa_{\gamma_{1,l}}(t)-\kappa_{\gamma_{1,r}}(t)}=1. \]
Therefore the logarithmic derivative of $\gamma_1^m$ is constant and given by
%\[ \tilde{\Lambda}_{\gamma_1^m}=\frac{1}{2}\left(m\sqrt{3}\pi\i+m\pi\k,m\pi\k\right) \]
%and
\[  
\Lambda_{\gamma_1^m}=\frac{\pi}{2}
\begin{pmatrix}
0 & -m\sqrt{3} & 0  & 0 \\
m\sqrt{3} & 0 & -2m & 0  \\
0 & 2m & 0 & -m\sqrt{3} \\
0 & 0 & m\sqrt{3} & 0
\end{pmatrix}.
\]
%The holonomic curve can be computed explicitly:
%\[ \tilde{\Gamma}_{\gamma_1^m}(t)=\left(\exp\left(m\pi t\frac{\sqrt{3}\i+\k}{2}\right),\exp\left(m\pi t\frac{\k}{2}\right)\right). \]
The Jacobian curve $\Gamma_{\gamma_1^m}$ satisfies
\[ \Gamma_{\gamma_1^m}'(t)=\Gamma_{\gamma_1^m}(t)\Lambda_{\gamma_1^m}, \quad \Gamma_{\gamma_1^m}(0)=I \]
and can also be computed explicitly since it is the exponential of $\Gamma_{\gamma_1^m}$, that is
\[ \Gamma_{\gamma_1^m}(t)=\exp(t\Lambda_{\gamma_1^m}). \] 
The curve $\gamma_1^m$ is then equal to $\Gamma_{\gamma_1^m}e_1$, and we find that

\begin{eqnarray*}
\gamma_1^m(t) & = & \left(\frac{1}{4}\cos\left(\frac{3}{2}t\pi m\right)+\frac{3}{4}\cos\left(\frac{1}{2}t\pi m\right) \right.,\\
&  & \frac{\sqrt{3}}{4}\sin\left(\frac{3}{2}t\pi m\right)+\frac{\sqrt{3}}{4}\sin\left(\frac{1}{2}t\pi m\right), \\ 
&  & \frac{\sqrt{3}}{4}\cos\left(\frac{1}{2}t\pi m\right)-\frac{\sqrt{3}}{4}\cos\left(\frac{3}{2}t\pi m\right), \\
&  & \left.\frac{3}{4}\sin\left(\frac{1}{2}t\pi m\right)-\frac{1}{4}\sin\left(\frac{3}{2}t\pi m\right)\right).
\end{eqnarray*}

Below we give an illustration in the case $m=5$ (Figure~\ref{fig:g}).
%$m=1$ (Figure~\ref{fig:c}), $m=2$ (Figure~\ref{fig:d}), $m=3$ (Figure~\ref{fig:e}), $m=4$ (Figure~\ref{fig:f}) and   

%\begin{figure}[H]
%\centering
%\includegraphics[scale=0.5]{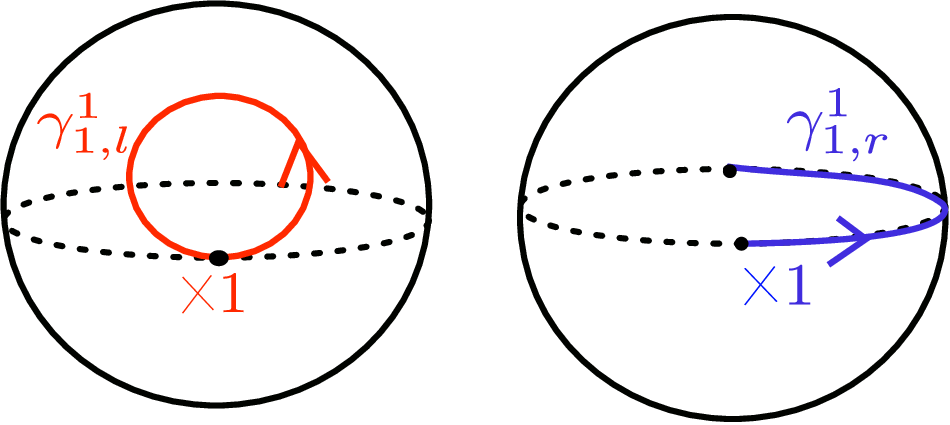}
%\caption{The curve $\gamma_1^1$.}
%\label{fig:c}
%\end{figure}
%
%\begin{figure}[H]
%\centering
%\includegraphics[scale=0.5]{cuva_m2_1,-1.eps}
%\caption{The curve $\gamma_1^2$.}
%\label{fig:d}
%\end{figure}
%
%\begin{figure}[H]
%\centering
%\includegraphics[scale=0.5]{curva_m3_-1,-k.eps}
%\caption{The curve $\gamma_1^3$.}
%\label{fig:e}
%\end{figure}
%
%\begin{figure}[H]
%\centering
%\includegraphics[scale=0.5]{novo.eps}
%\caption{The curve $\gamma_1^4$.}
%\label{fig:f}
%\end{figure}

\begin{figure}[H]
\centering
\includegraphics[scale=0.3]{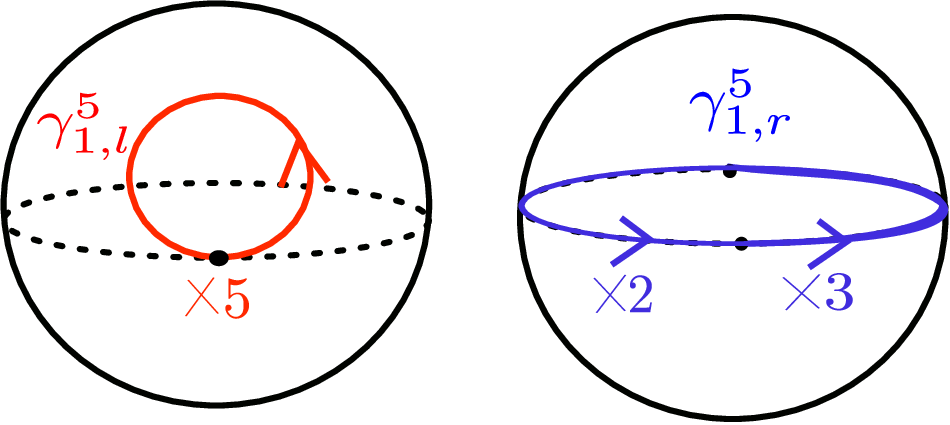}
\caption{The curve $\gamma_1^5 \in \mathcal{L}\SS^3(-\1,\k)$, where $\gamma_{1,l}^5=\sigma_{\pi}^5$ and $\gamma_{1,r}^5=\sigma_{2\pi}^{5/2}$ .}
\label{fig:g}
\end{figure}

\begin{proposition} \label{curve gamma -1,k is convex} 
The curve $\gamma_1^1 \in \mathcal{L}\SS^3(-\1,\k)$ is convex.
\end{proposition}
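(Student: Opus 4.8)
The plan is to verify convexity directly from the explicit parametrization of $\gamma_1^1$ obtained above by specializing $m=1$. Recall that convexity means every hyperplane $H \subset \R^4$ through the origin meets the image of $\gamma_1^1$, counted with multiplicity, in at most $3$ points. A hyperplane through the origin is the kernel of a nonzero linear functional, so writing $v=(v_1,v_2,v_3,v_4)$ for its normal vector, the intersections with $H$ correspond precisely to the zeros in $(0,1)$ of the scalar function $f_v(t):=v\cdot\gamma_1^1(t)$. Thus the statement to prove is: \emph{for every nonzero $v\in\R^4$, the function $f_v(t)=v\cdot\gamma_1^1(t)$ has at most $3$ zeros in $(0,1)$, counted with multiplicity}, where multiplicity of a zero $t_0$ is the order of vanishing of $f_v$ at $t_0$.

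Substituting $m=1$ into the formula for $\gamma_1^m$, each coordinate is a linear combination of $\cos(\tfrac{1}{2}t\pi)$, $\sin(\tfrac{1}{2}t\pi)$, $\cos(\tfrac{3}{2}t\pi)$, $\sin(\tfrac{3}{2}t\pi)$. Hence for any $v$, the function $f_v(t)$ is a trigonometric polynomial of the form
\[
f_v(t)=a_1\cos\!\Big(\tfrac{\pi t}{2}\Big)+b_1\sin\!\Big(\tfrac{\pi t}{2}\Big)+a_3\cos\!\Big(\tfrac{3\pi t}{2}\Big)+b_3\sin\!\Big(\tfrac{3\pi t}{2}\Big),
\]
where the coefficients $(a_1,b_1,a_3,b_3)$ depend linearly and invertibly on $v$. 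First I would record this invertibility: the $4\times 4$ matrix expressing $(a_1,b_1,a_3,b_3)$ in terms of $v$ is (up to the scalar factors $\tfrac14,\tfrac{\sqrt3}{4}$, etc.) nonsingular, so $v\neq 0$ is equivalent to $(a_1,b_1,a_3,b_3)\neq 0$. It therefore suffices to bound the number of zeros in $(0,1)$ of an arbitrary nonzero function of the above form. The natural tool is that such expressions are solutions of a linear ODE with characteristic frequencies $\pm\tfrac{\pi}{2},\pm\tfrac{3\pi}{2}$, so $f_v$ lies in a $4$-dimensional Chebyshev-type space; the cleanest route is the substitution $s=\tfrac{\pi t}{2}$ followed by writing everything in terms of $u=\cos s,\ \sin s$ and using the triple-angle identities $\cos 3s=4\cos^3 s-3\cos s$, $\sin 3s=(4\cos^2 s-1)\sin s$ to reduce $f_v$ to $P(\cos s)+\sin s\,Q(\cos s)$ with $\deg P\le 3$ and $\deg Q\le 2$.

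The key step is then a zero-counting argument on the interval $s\in(0,\pi/2)$ (the image of $t\in(0,1)$). On this interval $\sin s>0$, and the strategy is to eliminate the $\sin s$ term: zeros of $f_v$ other than those of $\sin s$ satisfy $P(\cos s)=-\sin s\,Q(\cos s)$, which after squaring and using $\sin^2 s=1-\cos^2 s$ becomes a polynomial identity $P(u)^2=(1-u^2)Q(u)^2$ in $u=\cos s$ of degree at most $6$; bounding its real roots in $(-1,1)$, while carefully tracking the multiplicity bookkeeping and the possible spurious roots introduced by squaring, gives the count. The main obstacle I anticipate is exactly this multiplicity accounting: translating ``counted with multiplicity'' for hyperplane intersections into orders of vanishing of $f_v$, and ensuring the squaring step neither loses nor fabricates zeros, so that the final bound is genuinely $\le 3$ and not merely $\le$ some larger constant. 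A cleaner alternative that sidesteps squaring, and which I would prefer if the bookkeeping gets delicate, is to exploit the extended-Chebyshev (Markov) property of the system $\{\cos\tfrac{\pi t}{2},\sin\tfrac{\pi t}{2},\cos\tfrac{3\pi t}{2},\sin\tfrac{3\pi t}{2}\}$ on a suitable interval: if this ordered system is an ECT-system on $[0,1]$, then any nontrivial linear combination has at most $3$ zeros counting multiplicity, which is precisely the assertion. Establishing the ECT property via nonvanishing Wronskians would give the result uniformly in $v$ and is the route I expect to be both rigorous and short; confirming those Wronskians do not vanish on $(0,1)$ is then the one calculation that must be checked.
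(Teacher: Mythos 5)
Your reduction is the right one: convexity is equivalent to the statement that for every nonzero $v$ the function $f_v(t)=v\cdot\gamma_1^1(t)$, which after the substitution $s=\pi t/2$ is a nontrivial combination $a_1\cos s+b_1\sin s+a_3\cos 3s+b_3\sin 3s$, has at most $3$ zeros with multiplicity for $s\in(0,\pi/2)$ (and your remark that $v\mapsto(a_1,b_1,a_3,b_3)$ is invertible is correct and necessary). The gap is in the step that is supposed to produce the bound $3$. Your primary route (write $f_v=P(\cos s)+\sin s\,Q(\cos s)$ and square) cannot give $3$ as stated: squaring produces $P(u)^2-(1-u^2)Q(u)^2$, whose roots in $u=\cos s\in(0,1)$ account for the zeros of $f_v(s)$ \emph{and} of $f_v(-s)$ together, so the degree-$6$ bound only yields ``at most $6$'' for $f_v$ on $(0,\pi/2)$; the split between the two factors genuinely varies with $v$ (e.g.\ the combination corresponding to the cubic $(x-1)(x-2)(x-3)$ in $\tan s$ puts all three roots on the $f_v$ side), so no bookkeeping recovers $3$ without further input. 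Your fallback (ECT/Wronskian) is correct in principle but is exactly the unperformed computation on which everything rests, so as written the proof is not complete.

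The missing observation — which is the paper's proof — is that your triple-angle reduction does not produce a general pair $(P,Q)$: the four functions $\cos s,\sin s,\cos 3s,\sin 3s$ span the same space as $\cos^3 s,\ \cos^3 s\tan s,\ \cos^3 s\tan^2 s,\ \cos^3 s\tan^3 s$. Concretely, the first coordinate of the reparametrized curve is $\tfrac14\cos 3s+\tfrac34\cos s=\cos^3 s$, and the central projection $p(x)=(1,x_2/x_1,x_3/x_1,x_4/x_1)$ sends the curve to $(1,\sqrt{3}\tan s,\sqrt{3}\tan^2 s,\tan^3 s)$, the moment curve in $x=\tan s$. Hence $f_v(s)=\cos^3 s\cdot c(\tan s)$ with $c$ a nonzero cubic, and since $\cos^3 s>0$ and $\tan$ is a diffeomorphism of $(0,\pi/2)$ onto $(0,\infty)$, the bound of $3$ zeros with multiplicity is immediate. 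This factorization is also the painless way to verify your ECT claim, should you prefer to phrase the argument that way.
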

%The space $\mathcal{L}\SS^3(-\1,\k)$ also contains convex curves. Indeed, recall the family of curves $\gamma_{1}^m \in \mathcal{L}\SS^3((-\1)^m,\k^m)$, $m \geq 1$, defined in Example~\ref{family1}; 
\begin{proof}
We will prove that the curve $\gamma_{1}^1 \in \mathcal{L}\SS^3(-\1,\k)$ defined in Example~\ref{family1} is convex (see Figure~\ref{fig:c}). 

\begin{figure}[H]
\centering
\includegraphics[scale=0.3]{curva_m1_-1,k.eps}
\caption{The curve $\gamma_1^1 \in \mathcal{L}\SS^3(-\1,\k)$, where $\gamma_{1,l}^1=\sigma_{\pi}^1$ and $\gamma_{1,r}^1=\sigma_{2\pi}^{1/2}$.}
\label{fig:c}
\end{figure}

Up to a reparametrization with constant speed, this curve is the same as the curve $\tilde{\gamma} : [0,\pi/2] \rightarrow \SS^3$ defined by
\begin{eqnarray*}
\tilde{\gamma}(t) & = & \left(\frac{1}{4}\cos\left(3t\right)+\frac{3}{4}\cos\left(t\right)\right., \\
&  & \frac{\sqrt{3}}{4}\sin\left(3t \right)+\frac{\sqrt{3}}{4}\sin\left(t\right), \\
&  & \frac{\sqrt{3}}{4}\cos\left(t\right)-\frac{\sqrt{3}}{4}\cos\left(3t\right), \\
&  & \left.\frac{3}{4}\sin\left(t\right)-\frac{1}{4}\sin\left(3t \right)\right).
\end{eqnarray*} 
%Since being convex is independent of the choice of a parametrization, 
Note that, it is sufficient to prove that $\tilde{\gamma}$ is convex. 
Observe that for $t \in [0,\pi/2)$, the first component of $\tilde{\gamma}$ never vanishes, so if we define the central projection
\[ p : (x_1,x_2,x_3,x_4) \in \R^4 \mapsto \left(1,\frac{x_2}{x_1},\frac{x_3}{x_1},\frac{x_4}{x_1}\right), \]
then it is sufficient to prove that the curve $p(\tilde{\gamma})$, defined for $t \in [0,\pi/2)$ is convex.
%: this follows from $(ii)$ of Corollary~\ref{corlocconv}, Section~\ref{preliminaries}. 
We compute
\[ p(\tilde{\gamma}(t))=\left(1,\sqrt{3}\tan t,\sqrt{3}(\tan t)^2,(\tan t)^3\right) \]
and hence, if we reparametrize by setting $x=\tan t$, we obtain the curve
\[ x \in [0,+\infty) \mapsto (1,\sqrt{3}x,\sqrt{3}x^2,x^3) \in \R^4. \]
It is now obvious that this curve is convex, and therefore our initial curve $\gamma_1^1$ is convex.
\end{proof}

\begin{proposition} \label{curve gamma 1,-1 is convex} 
The curve $\gamma_1^2 \in \mathcal{L}\SS^3(\1,-\1)$ is convex.
\end{proposition}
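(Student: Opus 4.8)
The plan is to mirror exactly the strategy used in the proof of Proposition~\ref{curve gamma -1,k is convex}, since the curve $\gamma_1^2$ is produced by the same recipe of Example~\ref{family1} with $m=2$. First I would invoke point $(i)$ of Proposition~\ref{proplocconv} to reparametrize $\gamma_1^2$ by constant speed, replacing the argument $\frac{1}{2}t\pi m$ with $m=2$ by a cleaner variable; concretely, after the reparametrization the curve takes the form
\[ \tilde{\gamma}(t)=\left(\tfrac{1}{4}\cos(3t)+\tfrac{3}{4}\cos(t),\ \tfrac{\sqrt{3}}{4}\sin(3t)+\tfrac{\sqrt{3}}{4}\sin(t),\ \tfrac{\sqrt{3}}{4}\cos(t)-\tfrac{\sqrt{3}}{4}\cos(3t),\ \tfrac{3}{4}\sin(t)-\tfrac{1}{4}\sin(3t)\right), \]
but now defined on the full interval $t\in[0,\pi]$ rather than $[0,\pi/2]$, since doubling the iteration count doubles the parameter range traversed. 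Because convexity is reparametrization-invariant, it suffices to establish convexity of $\tilde\gamma$ on $[0,\pi]$.

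The essential tool is the central projection $p$ used before. On the subinterval where the first coordinate does not vanish, the same computation gives $p(\tilde\gamma(t))=(1,\sqrt{3}\tan t,\sqrt{3}(\tan t)^2,(\tan t)^3)$, and the substitution $x=\tan t$ yields the moment-type curve $x\mapsto(1,\sqrt{3}x,\sqrt{3}x^2,x^3)$, which is manifestly convex (its determinant with derivatives is a positive multiple of a Vandermonde determinant). The crucial new feature for $m=2$ is that the first coordinate $\tfrac14\cos(3t)+\tfrac34\cos(t)$ now \emph{does} vanish inside $(0,\pi)$, so the central projection is no longer valid on the whole interval and the earlier one-chart argument does not directly close the case.

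The main obstacle is therefore handling convexity across the points where the first coordinate vanishes, equivalently verifying the global intersection condition from Subsection~\ref{convexcurves} directly. I would address this by checking convexity as a \emph{global} property: a smooth locally convex curve is convex precisely when every hyperplane $H\subset\R^4$ meets its image in at most $3$ points counted with multiplicity. Since $\tilde\gamma$ has coordinates that are trigonometric polynomials in $t$ of degree $3$, any hyperplane condition $a\cdot\tilde\gamma(t)=0$ is a trigonometric equation $\sum_{k}(\alpha_k\cos(kt)+\beta_k\sin(kt))=0$ with only harmonics $1$ and $3$ appearing, hence reducible (via $x=\tan(t)$ or the Weierstrass substitution on the relevant charts, patched together) to a polynomial equation of degree at most $3$ in a single variable. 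Counting roots with multiplicity then yields at most $3$ intersections, which is exactly the convexity bound for $n=3$. The delicate bookkeeping will be confirming that the substitution and multiplicity count behave correctly at the parameter values where $\cos(t)$ or the first coordinate is zero, and that no spurious or missed roots arise at the chart boundaries; once that is verified, convexity of $\tilde\gamma$, and hence of $\gamma_1^2$, follows.

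As an alternative that sidesteps the vanishing-coordinate issue entirely, I would observe that $\gamma_1^2$ has \emph{constant} logarithmic derivative $\Lambda_{\gamma_1^2}$ (computed in Example~\ref{family1}), so its Frenet frame is the one-parameter subgroup $\exp(t\Lambda_{\gamma_1^2})$. Convexity of such a homogeneous curve can be read off from the eigenvalue structure of $\Lambda_{\gamma_1^2}$: here the eigenvalues are $\pm\tfrac{i\pi}{2}$ and $\pm\tfrac{3i\pi}{2}$, so the motion is a product of two rational rotations with frequency ratio $1:3$, confirming the degree-$3$ trigonometric structure above and giving a clean conceptual reason for the intersection bound. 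I expect the cleanest write-up to combine the explicit chart argument on $[0,\pi]$ with this root-counting for the finitely many hyperplanes passing through the bad points.
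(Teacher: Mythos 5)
Your proposal is correct in substance, and it is worth noting that the paper itself gives no argument here: its ``proof'' consists of the single sentence that the case $m=2$ is ``entirely similar'' to $m=1$ and is left to the reader. You rightly observe that the $m=1$ argument does \emph{not} carry over verbatim, because the first coordinate $\tfrac14\cos(3t)+\tfrac34\cos(t)=\cos^3 t$ vanishes at the interior point $t=\pi/2$ of $[0,\pi]$, so the single central-projection chart no longer suffices; this is a genuine gap in the paper's deferral that your root-counting step fills. The cleanest way to execute your plan is to use the identity $\tilde\gamma(t)=\left(\cos^3 t,\ \sqrt{3}\cos^2 t\sin t,\ \sqrt{3}\cos t\sin^2 t,\ \sin^3 t\right)$: the span of these four functions equals the span of $\cos t,\sin t,\cos 3t,\sin 3t$, so any central hyperplane condition $a\cdot\tilde\gamma(t)=0$ is a nonzero binary cubic form in $(\cos t,\sin t)$, which has at most $3$ projective roots counted with multiplicity; since $t\mapsto(\cos t:\sin t)$ is injective on $[0,\pi)$, this gives at most $3$ intersection points and hence convexity, with the point $t=\pi/2$ (the ``point at infinity'' of the chart $x=\tan t$) handled automatically. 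Two cautions on your write-up: the Weierstrass substitution $u=\tan(t/2)$ gives degree $6$, not $3$, so stick with the homogeneous-cubic formulation; and the eigenvalue heuristic in your last paragraph cannot stand alone (the eigenvalues of $\Lambda_{\gamma_1^2}$ are $\pm i\pi,\pm 3i\pi$, not $\pm i\pi/2,\pm 3i\pi/2$, and the curve $\gamma_1^5$ has the same $1{:}3$ frequency ratio yet is shown in the paper to be non-convex, so the total time traversed, not just the spectrum, is what matters).
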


\begin{proof}
The proof of this assertion is entirely similar to the proof of the fact that $\gamma_1^1 \in \mathcal{L}\SS^3(-\1,\k)$ (see Figure~\ref{fig:d}) is convex, therefore we let this proof for the reader.

\begin{figure}[H]
\centering
\includegraphics[scale=0.3]{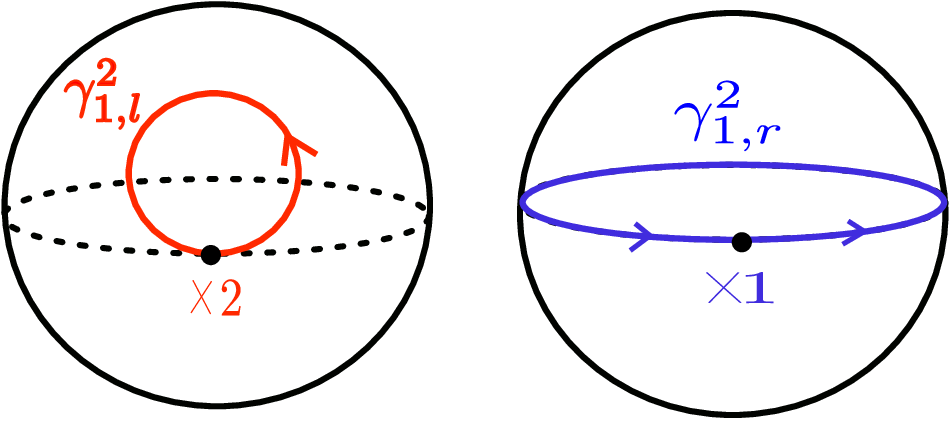}
\caption{The curve $\gamma_1^2 \in \mathcal{L}\SS^3(\1,-\1)$, where $\gamma_{1,l}^2 = \sigma_\pi^2$ and $\gamma_{1,r}^2 = \sigma_{2\pi}^{1}$.}
\label{fig:d}
\end{figure}
\end{proof}

Note that the curve $\gamma_1^2$ is a example of curve in the space $\mathcal{L}\SS^3(\1,-\1)$ that is convex but its left part $\sigma_\pi^2$ is not convex.

\section{Characterization of convex curves in the spaces $\mathcal{L}\SS^3(-\1,\k)$ and $\mathcal{L}\SS^3(\1,-\1)$}\label{sectionproofs}

Our nearest goal is to prove Theorem~\ref{th2}, i.e., a curve $\gamma \in \mathcal{L}\SS^3(-\1,\k)$ is convex if and only if $\gamma_l \in \mathcal{L}\SS^2(-\1)$ is convex. In the sequel, we will prove Theorem~\ref{th3}, i.e., if $\gamma \in \mathcal{L}\SS^3(\1,-\1)$ is convex then its left part $\gamma_l \in \mathcal{L}\SS^2(\1)$ is contained in an open hemisphere and its rotation number is 2. 

%In this subsection we will proof Theorem~\ref{th2} and Theorem~\ref{th3}.

%In this subsection, we give the proof of Theorem~\ref{th2} which characterizes convexity in the spaces $\mathcal{L}\SS^3(-\1,\k)$ by merely considering the left part of the curve.

%Let us now give the proof of Theorem~\ref{th3}, which gives a necessary condition for a curve in $\mathcal{L}\SS^3(\1,-\1)$ to be locally convex by merely considering its left part. 
%First we need to recall some basic definition and properties. 

\subsection{Proof of Theorem~\ref{th2} }\label{s63}

In this subsection, we give the proof of Theorem~\ref{th2} which characterizes convexity in the spaces $\mathcal{L}\SS^3(-\1,\k)$ by merely considering the left part of the curve.

\begin{proof}[Proof of Theorem~\ref{th2}]
Recall that we want to prove that a curve $\gamma \in \mathcal{L}\SS^3(-\1,\k)$ is convex if and only its left part $\gamma_l \in \mathcal{L}\SS^2(-\1)$ is convex.

It is clear that $\mathcal{L}\SS^2(-\1)$ contains convex curves; the curve $\sigma_c$, for $0 < c < 2\pi$ defined in Section~\ref{examples} is convex, since it intersects any hyperplane of $\R^3$ (or equivalently any great circle) in exactly two points. Using Theorem~\ref{thmani}, the space $\mathcal{L}\SS^2(-\mathbf{1})$ has therefore $2$ connected components,
\[ \mathcal{L}\SS^2(-\mathbf{1})=\mathcal{L}\SS^2(-\mathbf{1})_c \sqcup \mathcal{L}\SS^2(-\mathbf{1})_n, \]
where $\mathcal{L}\SS^2(-\mathbf{1})_c$ is the component associated with convex curves and $\mathcal{L}\SS^2(-\mathbf{1})_n$ the component associated with non-convex curves.

The space $\mathcal{L}\SS^3(-\1,\k)$ also contains convex curves. Indeed, we prove in the Proposition~\ref{curve gamma -1,k is convex} that the curve $\gamma_{1}^1 \in \mathcal{L}\SS^3(-\1,\k)$ is convex.
Therefore, again using the Theorem~\ref{thmani}, the space $\mathcal{L}\SS^3(-\mathbf{1},\k)$ has therefore $2$ connected components,
\[ \mathcal{L}\SS^3(-\mathbf{1},\k)=\mathcal{L}\SS^3(-\mathbf{1},\k)_c \sqcup \mathcal{L}\SS^3(-\mathbf{1},\k)_n, \]
where $\mathcal{L}\SS^3(-\mathbf{1},\k)_c$ is the component associated with convex curves and $\mathcal{L}\SS^3(-\mathbf{1},\k)_n$ the component associated with non-convex curves.

Then we can use Theorem~\ref{th1} to define a continuous map
\[ L : \mathcal{L}\SS^3(-\mathbf{1},\k) \rightarrow \mathcal{L}\SS^2(-\mathbf{1})  \]
by setting $L(\gamma)=\gamma_l$, where $(\gamma_l,\gamma_r)$ is the pair of curves associated with $\gamma$. Since $L$ is continuous and $\mathcal{L}\SS^3(-\mathbf{1},\k)_c$ is connected, its image by $L$ is also connected. Moreover, we know $\gamma_1^1 \in \mathcal{L}\SS^3(-\mathbf{1},\k)_c$, and that $L(\gamma_1^1)=\sigma_1 \in \mathcal{L}\SS^2(-\mathbf{1})_c$, therefore the image of $\mathcal{L}\SS^3(-\mathbf{1},\k)_c$ by $L$ intersects $\mathcal{L}\SS^2(-\mathbf{1})_c$; since the latter is connected we must have the inclusion
\[ L\left(\mathcal{L}\SS^3(-\mathbf{1},\k)_c\right) \subset \mathcal{L}\SS^2(-\mathbf{1})_c. \]
This proves one part of the statement, namely that if $\gamma \in \mathcal{L}\SS^3(-\mathbf{1},\k)_c$, then its left part $\gamma_l=L(\gamma) \in \mathcal{L}\SS^2(-\mathbf{1})_c$. To prove the other part, it is enough to verify that    
\[ L\left(\mathcal{L}\SS^3(-\mathbf{1},\k)_n\right) \subset \mathcal{L}\SS^2(-\mathbf{1})_n. \]
To show this inclusion, using continuity and connectedness arguments as before, it is enough to find one element in $\mathcal{L}\SS^3(-\mathbf{1},\k)_n$ whose image by $L$ belongs to $\mathcal{L}\SS^2(-\mathbf{1})_n$. We claim that the curve $\gamma_1^5$ from Example~\ref{family1} does the job. To see that $\gamma_1^5 \in \mathcal{L}\SS^3(-\mathbf{1},\k)_n$, one can easily check that if we define the plane 
\[ H=\{(x_1,0,0,x_4) \in \R^4 \; | \; x_1 \in \R, \; x_4 \in \R\} \]  
then
\[ \gamma_1^5(t_i) \in H, \quad t_i=\frac{i}{5}, \quad 1 \leq i \leq 4. \]
Hence $\gamma_1^5$ has at least $4$ points of intersection with $H$; this shows that $\gamma_5$ is not convex. To conclude, it is clear that $L(\gamma_1^5)=\sigma_5 \in \mathcal{L}\SS^2(-\mathbf{1})_n$. Hence this proves the desired inclusion and concludes the proof.

\end{proof}  

%\medskip

%Let us observe that we used in the proof that the curve $\gamma_{1}^1 \in \mathcal{L}\SS^3(-\1,\k)$, defined in Example~\ref{family1}, is convex. It is easy to see that its final lifted Frenet frame, which is $(-\1,\k) \in \SS^3 \times \SS^3$ projects down to the transpose of Arnold matrix $A ^\top \in B_4^+$, and we will see later that this implies that this curve is in fact stably convex ($(-\1,\k) \in \tilde{B}_4^+$ is the only stably convex spin).

\subsection{Proof of Theorem ~\ref{th3}}\label{s64}

Now we detail the proof of Theorem~\ref{th3}, which gives a necessary condition for a curve in $\mathcal{L}\SS^3(\1,-\1)$ to be locally convex by merely considering its left part. 
First we need to recall some basic definition and properties. 

An \emph{open hemisphere} $H$ in $\SS^2$ is a subset of $\SS^2$ of the form
$ H_h=\{x \in \SS^2 \; | \; h\cdot x >0\} $
for some $h \in \SS^2$, and a \emph{closed hemisphere} is the closure $\bar{H}$ of an open hemisphere, that is it has the form 
$ \bar{H}_h=\{x \in \SS^2 \; | \; h\cdot x \geq 0\}. $
We can make the following definition.
A closed curve $\gamma : [0,1] \rightarrow \SS^2$ is \emph{hemispherical} if it its image is contained in an open hemisphere of $\SS^2$. It is \emph{borderline hemispherical} if it is contained in a closed hemisphere but not contained in any open hemisphere.
%\begin{definition}
%\end{definition}

Following~\cite{Zul12}, we define a rotation number for any closed curve $\gamma$ in $\SS^2$ contained in a closed hemisphere (such a curve is either hemispherical or borderline hemispherical). To such a closed curve $\gamma$ contained in a closed hemisphere, there is a distinguished choice of hemisphere $h_\gamma$ containing the image of $\gamma$ (this hemisphere $h_\gamma$ is the barycenter of the set of all closed hemisphere containing the image of $\gamma$, the latter being geodesically convex, see~\cite{Zul12} for further details). Let $\Pi_{h_\gamma} : \SS^2 \rightarrow \R^2$ be the stereographic projection from $-h_\gamma$, and $\eta_\gamma= \Pi_{h_\gamma} \circ \gamma$. The curve $\eta_\gamma$ is now a closed curve in the plane $\R^2$, and it is an immersion. The definition of its rotation number $\mathrm{rot}(\eta_\gamma) \in \Z$ is now classical: for instance, it can be defined to be the degree of the map
\[ t \in \SS^1 \mapsto \frac{\eta_\gamma'(t)}{||\eta_\gamma'(t)||} \in \SS^1. \]    
Given a closed curve contained in a closed hemisphere in $\SS^2$, its rotation number $\mathrm{rot}(\gamma)$ is defined by
$ \mathrm{rot}(\gamma):=-\mathrm{rot}(\eta_\gamma) \in \Z. $
%\begin{definition}\label{rot}
%\end{definition}

The proof of Theorem~\ref{th3} will be based on two lemmas. The first lemma is a well-known property. 
For ease of presentation we omit the proof.

\begin{lemma}\label{lem1}
Consider a continuous map $H : [0,1] \rightarrow \mathcal{L}\SS^2(\1)$ such that $\gamma_0=H(0)$ has the property of being hemispherical with rotation number equal to $2$ and $\gamma_1=H(1)$ which does not have this property. Then there exists a time $t>0$ such that $\gamma_t=H(t)$ is borderline hemispherical with rotation number equal to $2$. 
\end{lemma}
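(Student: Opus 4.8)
The plan is to study the set of times at which the stated property holds and to locate its boundary. Define
\[ A = \{\, t \in [0,1] \;:\; \gamma_t \text{ is hemispherical and } \mathrm{rot}(\gamma_t)=2 \,\}. \]
Being contained in an open hemisphere is an open condition on a curve (if $\min_s \gamma(s)\cdot h = \delta>0$, then the same $h$ works for every $C^0$-nearby curve), so $\{t : \gamma_t \text{ is hemispherical}\}$ is open in $[0,1]$; on this open set the rotation number is defined and, being an integer-valued quantity depending continuously on the curve, is locally constant. Hence $A$ is open in $[0,1]$. By hypothesis $0\in A$ and $1\notin A$. Since $A$ is open and $0\in A$, setting $t^\ast=\inf([0,1]\setminus A)$ yields $t^\ast>0$, $[0,t^\ast)\subseteq A$, and $t^\ast\notin A$ (the complement being closed).

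Next I would analyze $\gamma_{t^\ast}$. For each $t<t^\ast$ the curve $\gamma_t$ lies in the closed hemisphere $\bar H_{h_{\gamma_t}}$, so $\gamma_t(s)\cdot h_{\gamma_t}\ge 0$ for all $s$. Choosing $t_n\uparrow t^\ast$ and passing to a subsequence with $h_{\gamma_{t_n}}\to h^\ast\in\SS^2$ (compactness of $\SS^2$), the uniform convergence $\gamma_{t_n}\to\gamma_{t^\ast}$ gives $\gamma_{t^\ast}(s)\cdot h^\ast\ge 0$ for all $s$; thus $\gamma_{t^\ast}$ is contained in a closed hemisphere and is therefore either hemispherical or borderline hemispherical. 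It cannot be hemispherical: if it were, an open neighborhood of $\gamma_{t^\ast}$ would consist of hemispherical curves on which $\mathrm{rot}$ is locally constant, forcing $\mathrm{rot}(\gamma_{t^\ast})=\lim_{t\uparrow t^\ast}\mathrm{rot}(\gamma_t)=2$ and hence $t^\ast\in A$, a contradiction. Therefore $\gamma_{t^\ast}$ is borderline hemispherical.

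It remains to show $\mathrm{rot}(\gamma_{t^\ast})=2$. Here I would invoke the continuity of the rotation number across the transition from the hemispherical to the borderline hemispherical regime, as set up in~\cite{Zul12}: the distinguished hemisphere $h_\gamma$ (the barycenter of the geodesically convex set of closed hemispheres containing $\gamma$) depends continuously on $\gamma$ even at this interface, and since $\gamma_{t^\ast}\subset\bar H_{h_{\gamma_{t^\ast}}}$ avoids the projection pole $-h_{\gamma_{t^\ast}}$, the planar immersions $\eta_{\gamma_t}=\Pi_{h_{\gamma_t}}\circ\gamma_t$ converge in $C^1$ to $\eta_{\gamma_{t^\ast}}$ as $t\uparrow t^\ast$. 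The rotation number, being the degree of the normalized tangent map, is invariant under such convergence, so $\mathrm{rot}(\gamma_{t^\ast})=\lim_{t\uparrow t^\ast}\mathrm{rot}(\gamma_t)=2$, which completes the argument with $t=t^\ast$.

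The delicate step, and the main obstacle, is precisely this last one: establishing the continuity of the distinguished hemisphere $h_\gamma$ (equivalently the $C^1$-convergence of the projected curves) at the interface between hemispherical and borderline hemispherical curves, where the set of containing hemispheres degenerates from having nonempty interior to being a point or a geodesic arc. I would control this via upper semicontinuity of the set-valued map $\gamma\mapsto\{h\in\SS^2 : \gamma(s)\cdot h\ge 0 \ \forall s\}$ (any limit of containing hemispheres of the $\gamma_{t_n}$ is a containing hemisphere of $\gamma_{t^\ast}$, which is exactly the estimate already used above), together with the continuity of the barycenter operation established in~\cite{Zul12}.
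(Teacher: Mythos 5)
The paper does not actually prove this lemma --- it is stated with the remark that it is ``a well-known property'' and the proof is omitted --- so there is no argument of the author's to compare yours against. Your proposal supplies the standard argument and its structure is sound: openness of the set $A$ of parameters where the property holds, extraction of the first exit time $t^\ast>0$, the compactness/upper-semicontinuity argument showing $\gamma_{t^\ast}$ lies in a closed hemisphere, and the exclusion of the hemispherical case at $t^\ast$ are all correct. The one point I would push back on is the final step: you invoke continuity of the distinguished (barycentric) hemisphere $h_\gamma$ \emph{at the interface} between hemispherical and borderline hemispherical curves. That is exactly where the set $K_\gamma=\{h:\gamma(s)\cdot h\ge 0\ \forall s\}$ collapses from a set with nonempty interior to a point or geodesic arc, and continuity of the barycenter there is both doubtful and, more importantly, unnecessary. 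What you actually have from your own compactness step is that the limit $h^\ast$ of the $h_{\gamma_{t_n}}$ is \emph{some} element of $K_{\gamma_{t^\ast}}$; since $\gamma_{t^\ast}$ avoids $-h'$ for every $h'\in K_{\gamma_{t^\ast}}$, the degree of the projected tangent map is locally constant in $h'$, and $K_{\gamma_{t^\ast}}$ is geodesically convex, hence connected, so the rotation number computed by projecting from $-h^\ast$ coincides with the one computed from $-h_{\gamma_{t^\ast}}$. With that observation your $C^1$-convergence argument closes the proof without any claim about where the barycenter goes. I would rewrite the last paragraph along those lines; as it stands, the reliance on barycenter continuity at the degenerate interface is the only soft spot in an otherwise complete proof.
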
 

The next lemma will be proven below.

\begin{lemma}\label{lem2}
Consider the map $L : \mathcal{L}\SS^3(\1,-\1) \rightarrow \mathcal{L}\SS^2(\1)$ given by $L(\gamma)=\gamma_l$, and let $\mathcal{L}\SS^3(\1,-\1)_c$ be the set of convex curves. Then the image of $\mathcal{L}\SS^3(\1,-\1)_c$ by $L$ does not contain a borderline hemispherical curve with rotation number equal to $2$. 
\end{lemma}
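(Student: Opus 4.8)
The plan is to argue by contradiction: suppose $\gamma \in \mathcal{L}\SS^3(\1,-\1)$ is convex while $\gamma_l = L(\gamma)$ is borderline hemispherical with rotation number $2$, and derive a violation of convexity. Recall that convexity means $\gamma$ meets every hyperplane $v^\perp \subset \R^4$ (with $v \in \SS^3$) in at most $3$ points counted with multiplicity; moreover, since $\gamma$ is locally convex the vectors $\gamma(t),\gamma'(t),\gamma''(t),\gamma'''(t)$ span $\R^4$, so the multiplicity of $\gamma$ with a single $v^\perp$ at any one time is at most $3$. Hence it suffices to exhibit one hyperplane $v^\perp$ meeting $\gamma$ at two distinct times with multiplicities summing to at least $4$. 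I would first record the elementary fact that $v^\perp$ is tangent to $\gamma$ at $t$ (multiplicity $\geq 2$) exactly when $v \perp \gamma(t)$ and $v\perp \gamma'(t)$, that is, when $v \in \mathrm{span}(\mathbf{n}_\gamma(t),\mathbf{b}_\gamma(t))$.

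Next I would translate everything into quaternions using the lift $\tilde{\mathcal{F}}_\gamma(t) = (z_l(t),z_r(t)) \in \SS^3 \times \SS^3$. With $e_1=\1,\ e_2=\i,\ e_3=\j,\ e_4=\k$ one has
\[ \gamma = z_l\overline{z_r},\quad \mathbf{t}_\gamma = z_l\i\,\overline{z_r},\quad \mathbf{n}_\gamma = z_l\j\,\overline{z_r},\quad \mathbf{b}_\gamma = z_l\k\,\overline{z_r}, \]
while the left part and its normal are $\gamma_l = z_l\i\,\overline{z_l}$ and $\mathbf{n}_{\gamma_l} = z_l\k\,\overline{z_l}$; the identification $z_l = \tilde{\mathcal{F}}_{\gamma_l}$ is precisely the content of the decomposition (compare the left block of~\eqref{logderives3}). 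Borderline hemisphericity supplies a great circle $C_h = \{x\in\SS^2 : h\cdot x = 0\}$, $h\in\SS^2$, with $\gamma_l$ contained in $\bar H_h$ and tangent to $C_h$. At a tangency time $t$ one has $h\cdot\gamma_l(t)=0$ and $h\cdot\gamma_l'(t)=0$, whence $\mathbf{n}_{\gamma_l}(t) = \pm h$, i.e. $h\,z_l(t) = \pm z_l(t)\k$. Substituting this into the formulas above yields the key identities
\[ \mathbf{b}_\gamma(t) = \pm\,h\,\gamma(t),\qquad \mathbf{n}_\gamma(t) = \mathbf{t}_{\gamma_l}(t)\,\gamma(t), \]
so that $\mathrm{span}(\mathbf{n}_\gamma(t),\mathbf{b}_\gamma(t)) = \mathrm{span}(\mathbf{t}_{\gamma_l}(t),h)\,\gamma(t)$ is a right translate of a fixed $2$-plane of imaginary quaternions.

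Finally I would exploit the rotation number. Because $\gamma_l\in\mathcal{L}\SS^2(\1)$ has rotation number $2$, I expect the borderline curve to be tangent to $C_h$ at (at least) two parameter values $t_0<t_1$ whose left lifts satisfy $z_l(t_1) = -z_l(t_0)$ (the lifted Frenet frame of $\gamma_l$ visits $-\1$ after the first sub-loop); note this forces $\mathbf{t}_{\gamma_l}(t_1)=\mathbf{t}_{\gamma_l}(t_0)$ and $\mathbf{n}_{\gamma_l}(t_1)=\mathbf{n}_{\gamma_l}(t_0)=\pm h$. A common vector $v \in \mathrm{span}(\mathbf{n}_\gamma(t_0),\mathbf{b}_\gamma(t_0)) \cap \mathrm{span}(\mathbf{n}_\gamma(t_1),\mathbf{b}_\gamma(t_1))$ would then make $v^\perp$ tangent to $\gamma$ at both $t_0$ and $t_1$, of total multiplicity $\geq 4$, contradicting convexity. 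Using the identities above together with $z_l(t_1)=-z_l(t_0)$, conjugating by $z_l(t_0)$ reduces the existence of such $v$ to the nontriviality of $\mathrm{span}(\j,\k)\cap\mathrm{span}(\j,\k)\,R$, where $R = \overline{z_r(t_1)}\,z_r(t_0)$; a short computation shows this holds if and only if $R\in\mathrm{span}(\1,\i)$, equivalently $\gamma_r(t_0)=\gamma_r(t_1)$.

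The main obstacle is exactly this last step: the two tangent $2$-planes need not meet for arbitrary right part, so I must bring in the structure of $z_r$ — the endpoint data $z_r(0)=\1,\ z_r(1)=-\1$ and the compatibility condition~(L) of Theorem~\ref{th1} ($\|\gamma_l'\|=\|\gamma_r'\|$ and $\kappa_{\gamma_l}>|\kappa_{\gamma_r}|$) — to force the alignment $R\in\mathrm{span}(\1,\i)$ at the tangency times. Should the forced alignment fail in general, the fallback I would pursue is to take instead $v=\mathbf{b}_\gamma(t_0)$, which already gives multiplicity $3$ at $t_0$ (as $\mathbf{b}_\gamma(t_0)\perp\gamma,\gamma',\gamma''$ there), and then show by a sign or winding argument that the function $t\mapsto\langle\gamma(t),v\rangle$ necessarily has a further zero on $[0,1]$, again producing total multiplicity $\geq 4$. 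Either way, converting the borderline contact of $\gamma_l$ with $C_h$ into a genuine fourth intersection of $\gamma$ with a hyperplane is the delicate point on which the whole lemma turns.
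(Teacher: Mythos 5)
Your strategy --- produce a single hyperplane meeting $\gamma$ with total multiplicity at least $4$ --- is a legitimate way to contradict convexity, but as written it has two genuine gaps, the second of which you yourself flag. First, the claim that borderline hemisphericity with rotation number $2$ yields tangency times $t_0<t_1$ with $z_l(t_1)=-z_l(t_0)$ is false in general: that identity forces $\mathcal{F}_{\gamma_l}(t_0)=\mathcal{F}_{\gamma_l}(t_1)$, i.e.\ the curve would have to touch the equator twice \emph{at the same point with the same tangent direction}, whereas a borderline hemispherical curve typically touches the boundary circle at distinct points (all one can guarantee is that the touching set is not contained in an open semicircle). The correct, weaker consequence is only that $\tilde{\mathcal{F}}_{\gamma_l}(t_i)$ lies in the one-parameter subgroup $\exp(\theta\k)$ of rotations about the axis $h$. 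Second, and more fundamentally, two $2$-planes in $\R^4$ generically meet only at the origin, so your candidate $v$ need not exist; your own computation shows it exists iff $\gamma_r(t_0)=\gamma_r(t_1)$, and nothing in condition (L) or the endpoint data forces that alignment. The fallback ($v=\mathbf{b}_\gamma(t_0)$ plus an unproven ``sign or winding argument'' for a fourth zero of $t\mapsto\langle\gamma(t),v\rangle$) is only sketched, so the contradiction is never actually reached.

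The paper's proof takes a different, purely algebraic route that sidesteps both issues: convexity is converted, via the Bruhat-cell characterization ($\mathcal{F}_\beta(t)$ remains in the cell of $A^\top$ for all $t$), into a factorization $\mathcal{F}_\beta(t)=A^\top L(t)U(t)$ with $L(t)$ unipotent lower triangular whose logarithmic derivative $L(t)^{-1}L'(t)$ has strictly positive subdiagonal entries and zeros elsewhere; the tangency of $\beta_l$ with the equator at $t_1$ and $t_2$ says exactly that $\mathcal{F}_\beta(t_i)$ commutes with $\k_l$, which forces the relation $l_{21}(t_i)=-l_{43}(t_i)$ at both times, and this is incompatible with the monotonicity imposed by the positive subdiagonal. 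If you wish to keep your geometric framework, you would need to replace the ``common tangent hyperplane'' step by an argument valid for distinct tangency points; as it stands the proposal does not prove the lemma.
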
 

%ESSE COMENTARIO DEVE SAIR DAQUI E ENTRAR NA SECAO DE CONJECTURAS

%In fact, we believe that a stronger statement is true: the image of the whole space $\mathcal{L}\SS^3(\1,-\1)$ by $L$ does not contain a borderline hemispherical curve with rotation number equal to $2$. With this stronger statement it would be easy to see from the proof below that our necessary condition for a curve in $\mathcal{L}\SS^3(\1,-\1)$ to be convex is also sufficient. Yet for the moment we are not able to prove this stronger statement.   

Now those lemmas build upon the setup of the problem to yield the proof of Theorem~\ref{th3}. 

\begin{proof}[Proof of Theorem~\ref{th3}]
Recall that the map $L : \mathcal{L}\SS^3(\1,-\1) \rightarrow \mathcal{L}\SS^2(\1)$ given by $L(\gamma)=\gamma_l$ is continuous, and that $\mathcal{L}\SS^3(\1,-\1)$ contains exactly two connected components, one of which is made of convex curves $\mathcal{L}\SS^3(\1,-\1)_c$ and the other of non-convex curves $\mathcal{L}\SS^3(\1,-\1)_n$.  We need to prove that the image of $\mathcal{L}\SS^3(\1,-\1)_c$ by $L$ contains only curves which are hemispherical with rotation number equal to $2$.

First let us prove that this image contains at least one such element. Recall the family of curves $\gamma_{1}^m \in \mathcal{L}\SS^3((-\1)^m,\k^m)$, $m \geq 1$, defined in Example~\ref{family1}. For $m=2$, the curve $\gamma_1^2=(\sigma_\pi^2,\sigma^1_{2\pi}) \in \mathcal{L}\SS^3(\1,-\1)$ is convex (see Section~\ref{examples}). 
%the proof of this assertion is entirely similar to the proof of the fact that $\gamma_1^1 \in \mathcal{L}\SS^3(-\1,\k)$ is convex (see the proof of Theorem~\ref{th2} in \S\ref{s63}). 
Moreover, it is clear that $L(\gamma_1^2)=\sigma_\pi^2$ is hemispherical and has rotation number equal to $2$, and therefore the image of  $\mathcal{L}\SS^3(\1,-\1)_c$ by $L$ contains at least the curve $L(\gamma_1^2)=\sigma_\pi^2$. 

To prove that the image of $\mathcal{L}\SS^3(\1,-\1)_c$ by $L$ contains only curves which are hemispherical with rotation number equal to $2$, we argue by contradiction, and assume that the image of $\mathcal{L}\SS^3(\1,-\1)_c$ by $L$ contains a curve which is not hemispherical with rotation number equal to $2$. Since $L$ is continuous and $\mathcal{L}\SS^3(\1,-\1)_c$ is connected, its image by $L$ is connected and thus we can find a homotopy $H : [0,1] \rightarrow L\left(\mathcal{L}\SS^3(\1,-\1)_c\right) \subset \mathcal{L}\SS^2(\1)$ between $H(0)=\sigma_\pi^2$, which is hemispherical with rotation number equal to $2$, and a curve $H(1)$ which does not have this property. Using Lemma~\ref{lem1}, one can find a time $t>0$ such that $H(t) \in L\left(\mathcal{L}\SS^3(\1,-\1)_c\right)$ is borderline hemispherical with rotation number equal to $2$. But by Lemma~\ref{lem2}, such a curve $H(t)$ cannot belong to $ L\left(\mathcal{L}\SS^3(\1,-\1)_c\right)$, and so we arrive at a contradiction.   
\end{proof}

To conclude, it remains to prove Lemma~\ref{lem2}.

\begin{proof}[Proof of Lemma~\ref{lem2}]
We argue by contradiction, and assume that there exists a curve $\beta \in \mathcal{L}\SS^3(\1,-\1)_c$ (that is a convex curve $\beta \in \mathcal{L}\SS^3(\1,-\1)$) such that its left part $\beta_l$ is  borderline hemispherical curve with rotation number equal to $2$. 

First we use our assumption that $\beta$ is convex, which implies that $\mathcal{F}_{\beta}(t)$ belongs to the Bruhat cell of $A ^\top$ for all time $t \in [0,1]$ (see Proposition $64$ in~\cite{Alv16} or Theorem $3$ in~\cite{GouSal19I}), where
\[ A ^\top=
\begin{pmatrix}
0 & 0 & 0 & -1 \\
0 & 0 & 1 & 0 \\
0 & -1 & 0 & 0 \\
1 & 0 & 0 & 0 
\end{pmatrix}.
\]  
Therefore, by definition, there exist matrices $U_1(t) \in \mathrm{Up}_4^+$, $U_2(t) \in \mathrm{Up}_4^+$ (recall that $\mathrm{Up}_4^+$ is the group of upper triangular $4 \times 4$ matrices with positive diagonal entries) such that
\[ \mathcal{F}_{\beta}(t)=U_1(t){} A ^\top U_2(t).\]
This condition can also be written as
\[ \mathcal{F}_{\beta}(t)={}  A ^\top L_1(t)U_2(t), \quad L_1(t):=AU_1(t){}A ^\top,\]
with $L_1(t) \in \mathrm{Lo}_4^+$, where $\mathrm{Lo}_4^+$ is the group of lower triangular $4 \times 4$ matrices with positive diagonal entries. Such a decomposition is not unique, but there exists a unique decomposition
\begin{equation}\label{dec}
\mathcal{F}_{\beta}(t)={}  A ^\top L(t)U(t),
\end{equation}
where $U(t) \in \mathrm{Up}_4^+$. Now $L(t)\in \mathrm{Lo}_4^1$, where $\mathrm{Lo}_4^1$ is the group of lower triangular $4 \times 4$ matrices with diagonal entries equal to one. Using the fact that $\mathcal{F}_{\beta}(t)^{-1}\mathcal{F}_{\beta}'(t)$ belongs to $\mathfrak{J}$ (because $\beta$ is in particular locally convex), it is easy to see, by a simple computation, that the matrix $L(t)$ in~\eqref{dec} is such that $L(t)^{-1}L'(t)$ has positive subdiagonal entries and all other entries are zero. That is, we can write
\begin{equation}\label{L}
L(t)^{-1}L'(t)=
\begin{pmatrix}
    0 & 0 & 0 &  0 \\
    + & 0 & 0 &  0 \\
    0 & + & 0 &  0 \\
    0 & 0 & + &  0
\end{pmatrix}
, \quad t \in [0,1].
\end{equation}

Then we use our assumption that the left part $\beta_l$ is a borderline hemispherical curve with rotation number equal to $2$. 
\begin{figure}[H]
\centering
\includegraphics[scale=0.3]{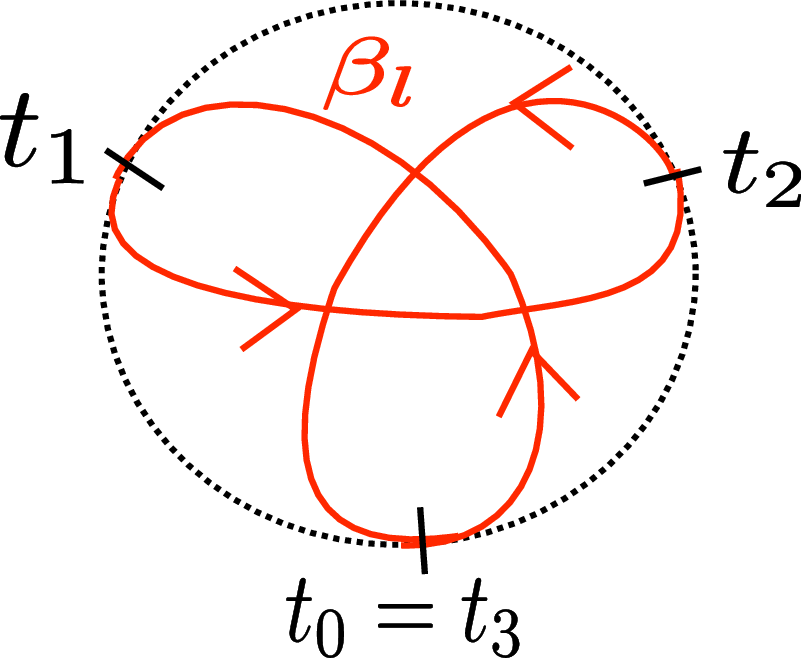}
\caption{The curve $\beta_l$.}
\label{fig:l}
\end{figure}
This implies (see Figure~\ref{fig:l}, where the dotted circle represents the equator of the sphere) that there exist times $t_1$ and $t_2$ and reals $\theta_1$ and $\theta_2$ such that
\[ \tilde{\mathcal{F}}_{\beta_l}(t_1)=\exp(\theta_1\k) \in \SS^3, \quad \tilde{\mathcal{F}}_{\beta_l}(t_2)=\exp(\theta_2\k) \in \SS^3.\]
Consequently, for $\beta$, we have
\begin{equation}\label{assum}
\begin{cases}
\tilde{\mathcal{F}}_{\beta}(t_1)=(\exp(\theta_1\k),z_r(t_1)) \in \SS^3 \times \SS^3 \\
\tilde{\mathcal{F}}_{\beta}(t_2)=(\exp(\theta_2\k),z_r(t_2)) \in \SS^3 \times \SS^3.
\end{cases}
\end{equation}

Following Subsection $4.1$ in~\cite{AlvSal19}, let us denote by $\k_l$ the matrix in $\mathfrak{so}_4$ that corresponds to the left multiplication by $\k \in \mathbb{H}$. This matrix is given by 
\[ \k_l=
\begin{pmatrix}
0 & 0 & 0 & -1 \\
0 & 0 & -1 & 0 \\
0 & +1 & 0  & 0 \\
+1 & 0 & 0 & 0
\end{pmatrix}. \]
Recalling that $\Pi_4 : \SS^3 \times \SS^3 \rightarrow \mathrm{SO}_4$ is the canonical projection, it follows from~\eqref{assum} that $\mathcal{F}_{\beta}(t_1)=\Pi_4(\tilde{\mathcal{F}}_{\beta}(t_1))$ and $\mathcal{F}_{\beta}(t_2)=\Pi_4(\tilde{\mathcal{F}}_{\beta}(t_2))$ belong to the subgroup $H$ of matrices in $\mathrm{SO}_4$ that commutes with the matrix $\k_l$. Clearly, this subgroup $H$ consists of matrices of the form
\[
\begin{pmatrix}
q_{11} & q_{12} & -q_{42} & -q_{41} \\
q_{21} & q_{22} & -q_{32} & -q_{31} \\
q_{31} & q_{32} & q_{22}  & q_{21} \\
q_{41} & q_{42} & q_{12} & q_{11}
\end{pmatrix} \in \mathrm{SO}_4. \]
Using this explicit form of $H$ and the fact that $\mathcal{F}_{\beta}(t_1) \in H$ and $\mathcal{F}_{\beta}(t_2) \in H$, one finds, after a direct computation, that the matrix 
\begin{equation}
L(t)=
\begin{pmatrix}
1 & 0 & 0 & 0 \\
l_{21}(t) & 1 & 0 & 0 \\
l_{31}(t) & l_{32}(t) & 1  & 0 \\
l_{41}(t) & l_{42}(t) & l_{43}(t) & 1
\end{pmatrix}
\end{equation}
defined in~\eqref{dec} satisfies, at $t=t_1$ and $t=t_2$, the conditions
\begin{equation}\label{contra}
l_{21}(t_1)=-l_{43}(t_1), \quad l_{21}(t_2)=-l_{43}(t_2). 
\end{equation}
But clearly, \eqref{contra} is not compatible with~\eqref{L}, and this gives the desired contradiction.  

\end{proof}

%%%%%%%%%%%%%%%%%%%%%%%%%%%%%%%%%%%%%%%%%%%%%%%%%%%%%%%%%%%%%%%%%%%%%%%%%%%%%%%%%%%%%%%%%%%%%%%%%%%%%%
\section{Final Considerations}

The study of the spaces of locally convex curves started in the seventies with the works of Litte on the $2$-sphere. 
But the research on the topological aspects on these spaces of curves on the spheres of higher dimension as in related spaces is very productive area, here we mention some other relevant works: \cite{Dub61}, \cite{MS012}, \cite{SZ13}, \cite{SZ15}, \cite{SZ16}, \cite{Sma58}, \cite{Sma59b}, \cite{Sma59}, \cite{Whi37} and \cite{Zhou}.
A very hard and interesting question in this topic is to determine the homotopy type of the spaces of locally convex curves on the $n$-sphere, for $n \geq 3$.
In this section we will give some directions of future research and some conjectures.
%Since then the study of these spaces and related one still very 

%The study of the spaces of locally convex curves is very important and blá.
%The main question in this topic is determine the homotopy type of these spaces.
%In this section we will give some directions of future research and some conjectures.

In~\cite{SS12}, N. Saldanha and B. Shapiro proved that the spaces $\mathcal{L}\SS^n(Q)$ fall in at most $\lceil \frac{n}{2} \rceil + 1$ equivalence classes up to homeomorphism, they also studied this classification in the double cover $\mathrm{Spin}_{n+1}$. Therefore, one natural question is to determine if the listed spaces are pairwise non-homemorphic.

The list in the case $n=2$ says that $\mathcal{L}\SS^2(Q)$ is homeomorphic to one of these 2 spaces
$\Omega(\mathrm{SO}_{3}), \;  \mathcal{L}\SS^2(I)$; and 
$\mathcal{L}\SS^2(z)$ is homeomorphic to one of these three spaces
$\Omega \SS^3, \; \mathcal{L}\SS^2(\1), \; \mathcal{L}\SS^2(-\1).$
In this case, all the listed spaces are non-homeomorphic.
Moreover, in~\cite{Sal13} the following homotopy equivalences are proven to hold
\[ \mathcal{L}\SS^2(\mathbf{1}) \approx (\Omega \SS^3) \vee \SS^2 \vee \SS^6 \vee \SS^{10} \vee \cdots, \quad \mathcal{L}\SS^2(-\mathbf{1})_n \approx (\Omega \SS^3) \vee \SS^4 \vee \SS^8 \vee \cdots.\]

%\begin{theo}[Saldanha, \cite{Sal13}]\label{thmsal}
%We have the following homotopy equivalences
%\[ \mathcal{L}\SS^2(\mathbf{1}) \approx (\Omega \SS^3) \vee \SS^2 \vee \SS^6 \vee \SS^{10} \vee \cdots, \quad \mathcal{L}\SS^2(-\mathbf{1})_n \approx (\Omega \SS^3) \vee \SS^4 \vee \SS^8 \vee \cdots.  \]
%\end{theo}

In the case $n=3$, i.e., in the case of $\mathrm{SO}_{4}$ there are at most 3 equivalence classes, and in the case of $\SS^3 \times \SS^3$ at most 5.
Therefore, $\mathcal{L}\SS^3(Q)$ is homeomorphic to one of these three spaces
$\mathcal{L}\SS^3(-I), \; \Omega(\mathrm{SO}_4), \; \mathcal{L}\SS^3(I)$;
and $\mathcal{L}\SS^3(z)$ is homeomorphic to one of these five spaces
$\mathcal{L}\SS^3(-\1,\1), \; \mathcal{L}\SS^3(\1,-\1), \; \Omega(\SS^3 \times \SS^3), \;
 \mathcal{L}\SS^3(\1,\1), \; \mathcal{L}\SS^3(-\1,-\1) . $
In particular, we have $\mathcal{L}\SS^3(-\1,\k) \simeq \mathcal{L}\SS^3(\1,-\1)$ and therefore we believe that a stronger version of Lemma~\ref{lem2} is true:
\begin{Conj} 
The image of the whole space $\mathcal{L}\SS^3(\1,-\1)$ by $L$ does not contain a borderline hemispherical curve with rotation number equal to $2$. 
\end{Conj}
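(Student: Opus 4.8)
The plan is to keep the opening of the proof of Lemma~\ref{lem2} verbatim and to replace only the step that invoked global convexity. Arguing by contradiction, I would assume there is some $\gamma\in\mathcal{L}\SS^3(\1,-\1)$ (now merely locally convex, not assumed to lie in $\mathcal{L}\SS^3(\1,-\1)_c$) whose left part $\gamma_l$ is borderline hemispherical with rotation number $2$. Exactly as in the proof of Lemma~\ref{lem2}, after choosing coordinates so that the distinguished hemisphere of $\gamma_l$ has pole along the $\k$-axis, the borderline-plus-rotation-$2$ hypothesis produces two times $t_1<t_2$ with $\tilde{\mathcal{F}}_{\gamma_l}(t_i)=\exp(\theta_i\k)$, hence $\mathcal{F}_\gamma(t_1),\mathcal{F}_\gamma(t_2)\in H$, the centralizer of $\k_l$. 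This portion of the argument never used convexity, so it carries over unchanged; what is lost is the decomposition~\eqref{dec} together with the monotonicity~\eqref{L}, which were the sole consequences of $\mathcal{F}_\gamma(t)$ lying in a single Bruhat cell.

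To replace that input, I would recast membership in the image of $L$ as a controllability problem for the right part. By Theorem~\ref{th1}, a curve $\gamma_l\in\mathcal{L}\SS^2(\1)$ lies in $L(\mathcal{L}\SS^3(\1,-\1))$ if and only if there is a control $\kappa_r:[0,1]\to\R$ with $|\kappa_r(t)|<\kappa_{\gamma_l}(t)$ for all $t$ such that the solution of the lifted Frenet equation on $\SS^3=\mathrm{Spin}_3$,
\[ z'(t)=z(t)\cdot\tfrac12\bigl(v(t)\kappa_r(t)\,\i+v(t)\,\k\bigr),\quad z(0)=\1,\quad v:=\|\gamma_l'\|, \]
satisfies $z(1)=-\1$, i.e.\ $\tilde{\mathcal{F}}_{\gamma_r}(1)=-\1$, the defining endpoint of $\mathcal{G}\SS^2(-\1)$. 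The conjecture is then equivalent to the assertion that, when $\gamma_l$ is borderline hemispherical with rotation number $2$, the target $-\1$ does \emph{not} lie in the open reachable set of this bilinear system. The guiding heuristic is that ``borderline'' should correspond exactly to $-\1$ sitting on the \emph{boundary} of the reachable set: with the closed constraint $|\kappa_r|\le\kappa_{\gamma_l}$ the point $-\1$ becomes reachable, which is why strictly hemispherical curves such as $\sigma_\pi^2$ (compatible with $\sigma_{2\pi}^1$) do belong to the image, whereas the strict inequality in condition~(L) excludes precisely the boundary case, that is, the borderline curves.

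Concretely I would try to make this rigorous by computing, or sharply estimating, the reachable set and identifying its boundary. One natural route is to observe that the body-frame angular velocity is confined to the open planar sector $\tfrac{v}{2}\k+\bigl(-\tfrac{v\kappa_{\gamma_l}}{2},\tfrac{v\kappa_{\gamma_l}}{2}\bigr)\i$ of the Lie algebra $\mathrm{Im}\,\H$, and to compare the holonomy of such controlled curves with the holonomy realised by $\tilde{\mathcal{F}}_{\gamma_l}$ itself, whose velocity $\tfrac12(v\kappa_{\gamma_l}\,\i+v\,\k)$ is the extreme edge of that sector. The borderline condition on $\gamma_l$ is exactly the statement that $\tilde{\mathcal{F}}_{\gamma_l}$ meets the circle $\exp(\theta\k)$ twice, and one wants to transfer this into an obstruction preventing the controlled curve from closing up at $-\1$ from the strict interior of the sector.

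The hard part will be precisely this last comparison: controlling the full nonlinear holonomy of a bilinear ODE on $\SS^3$ and proving that the two geodesic-curvature constraints in~(L) force $-\1$ to the boundary of reachability exactly when $\gamma_l$ is borderline. Unlike in Lemma~\ref{lem2}, there is no single Bruhat cell to supply a globally monotone pair of functions $l_{21},l_{43}$; for a non-convex $\gamma$ the frame $\mathcal{F}_\gamma(t)$ crosses finitely many Bruhat strata, and at each crossing the local unipotent factor and its associated signed permutation jump, so the clean monotonicity that produced the contradiction $l_{21}(t_i)=-l_{43}(t_i)$ breaks down. Turning the reachable-set heuristic into a genuine proof --- equivalently, finding a monotone-type invariant that survives the Bruhat-cell transitions --- is, I expect, the essential obstacle, and is presumably why the statement is offered only as a conjecture.
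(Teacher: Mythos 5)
You should first note that the paper does not prove this statement: it is presented explicitly as a conjecture, and the author writes that ``for the moment we are not able to prove this stronger statement.'' There is therefore no proof in the paper to compare yours against; the only proved result in this direction is Lemma~\ref{lem2}, which establishes the conclusion under the additional hypothesis that $\gamma$ is globally convex. Your diagnosis of where that proof uses convexity is accurate: the two frame values $\mathcal{F}_\gamma(t_1),\mathcal{F}_\gamma(t_2)$ in the centralizer of $\k_l$ come only from the borderline-plus-rotation-$2$ hypothesis on $\gamma_l$, while the decomposition~\eqref{dec} and the monotonicity~\eqref{L} --- the other half of the contradiction --- come from $\mathcal{F}_\gamma(t)$ staying in the single Bruhat cell of $A^\top$, which indeed fails for a merely locally convex curve.

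However, your proposal does not constitute a proof, and you acknowledge as much. The reformulation via Theorem~\ref{th1} of membership in $L\left(\mathcal{L}\SS^3(\1,-\1)\right)$ as a reachability question for the right lifted frame --- find $\kappa_r$ with $|\kappa_r|<\kappa_{\gamma_l}$ steering the bilinear system on $\mathrm{Spin}_3$ to $-\1$ --- is correct, but it is only a restatement of condition~(L), not progress on it. The substantive claim, namely that for a borderline hemispherical $\gamma_l$ of rotation number $2$ the point $-\1$ lies outside the open reachable set (and, more specifically, that ``borderline'' corresponds exactly to $-\1$ sitting on the boundary of reachability), is offered as a ``guiding heuristic'' and never substantiated: you produce no monotone invariant or conserved quantity replacing $l_{21},l_{43}$, and no comparison theorem for the holonomy of the controlled system against the extreme control $\kappa_r=\pm\kappa_{\gamma_l}$. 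Note also that the openness of the constraint $|\kappa_r|<\kappa_{\gamma_l}$ does not by itself identify the boundary of the reachable set with the borderline left parts; that identification is essentially equivalent to the conjecture itself. So the gap you flag in your final paragraph is the entire content of the statement, and the proposal, while a sensible research plan consistent with the paper's own framing and with its proof of Lemma~\ref{lem2}, leaves the conjecture open.
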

With this stronger statement it would be easy to see from the proof of Theorem~\ref{th3} that our necessary condition for a curve in $\mathcal{L}\SS^3(\1,-\1)$ to be convex is also sufficient. 
Yet for the moment we are not able to prove this stronger statement.

Furthermore, using some techniques developed in~\cite{GouSal19I},~\cite{GouSal19II} and~\cite{AlvSal19} we hope to proof the conjecture below, in particular solving the main problem in the case $n=3$, this is a joint work with V. Goulart, N. Saldanha and B. Shapiro. 

\begin{Conj}

We have the following weak homotopy equivalences:
\begin{align*}
\mathcal{L}\SS^3(+\1,+\1) &\approx 
\Omega(\SS^3 \times \SS^3) \vee \SS^4 \vee \SS^8 \vee \SS^8
\vee \SS^{12} \vee \SS^{12} \vee \SS^{12} \vee \cdots, \\
\mathcal{L}\SS^3(-\1,-\1) &\approx 
\Omega(\SS^3 \times \SS^3) \vee \SS^2 \vee \SS^6 \vee \SS^6
\vee \SS^{10} \vee \SS^{10} \vee \SS^{10} \vee \cdots, \\
\mathcal{L}\SS^3(+\1,-\1) &\approx 
\Omega(\SS^3 \times \SS^3) \vee \SS^0 \vee \SS^4 \vee \SS^4
\vee \SS^{8} \vee \SS^{8} \vee \SS^{8} \vee \cdots, \\
\mathcal{L}\SS^3(-\1,+\1) &\approx 
\Omega(\SS^3 \times \SS^3) \vee \SS^2 \vee \SS^6 \vee \SS^6
\vee \SS^{10} \vee \SS^{10} \vee \SS^{10} \vee \cdots.
\end{align*}
The above bouquets include one copy of $\SS^k$,
two copies of $\SS^{(k+4)}$, \dots, $j+1$ copies of $\SS^{(k+4j)}$, \dots,
and so on.

\end{Conj}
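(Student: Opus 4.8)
The plan is to compare $\mathcal{L}\SS^3(z_l,z_r)$ with the ambient space of generic curves $\mathcal{G}\SS^3(z_l,z_r)$, which by the Frenet frame injection together with the Hirsch--Smale theorem (recalled in Subsection~\ref{s33}) is homotopy equivalent to $\Omega\mathrm{Spin}_{4}=\Omega(\SS^3\times\SS^3)$. Thus the summand $\Omega(\SS^3\times\SS^3)$ in each of the four formulas is forced, and the whole content of the statement is the identification of the \emph{relative} homotopy of the inclusion $\mathcal{L}\SS^3(z_l,z_r)\hookrightarrow\mathcal{G}\SS^3(z_l,z_r)$ as a wedge of spheres. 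To bring in the $\SS^2$ geometry, where the analogous results are already known from \cite{Sal13} (namely $\mathcal{L}\SS^2(\1)\approx(\Omega\SS^3)\vee\SS^2\vee\SS^6\vee\cdots$ and $\mathcal{L}\SS^2(-\1)_n\approx(\Omega\SS^3)\vee\SS^4\vee\SS^8\vee\cdots$), I would pass through the decomposition of Theorem~\ref{th1}, representing each curve as a pair $(\gamma_l,\gamma_r)\in\mathcal{L}\SS^2(z_l)\times\mathcal{G}\SS^2(z_r)$ subject to the coupling condition (L).

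The core construction is a filtration of $\mathcal{L}\SS^3(z_l,z_r)$ by multiconvexity, in the spirit of the adding-loops technique of \cite{GouSal19I} and \cite{GouSal19II}. One stratifies by the itinerary of Bruhat cells visited by the Frenet frame curve and shows that, up to homotopy, the non-generic classes are produced by two commuting ``add a pair of loops'' operations, one associated with each $\SS^3$ factor of $\mathrm{Spin}_{4}=\SS^3\times\SS^3$; each such operation raises dimension by $4$. Granting that these operations act freely on a single bottom class of dimension $k=k(z_l,z_r)$, the extra summand acquires the Poincaré series
\[ t^{k}\cdot\frac{1}{1-t^4}\cdot\frac{1}{1-t^4}=t^{k}\sum_{j\ge0}(j+1)\,t^{4j}=\sum_{j\ge0}(j+1)\,t^{\,k+4j}, \]
in which the factor $(j+1)$ counts the ways of distributing $j$ loop-additions between the two operations. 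This reproduces exactly the stated pattern of $(j+1)$ copies of $\SS^{k+4j}$, the square of the denominator reflecting the two $\SS^3$ factors. The bottom dimension $k$ is then pinned down from the parities $(z_l,z_r)$ as the minimal index at which the convex defect first appears, yielding $k=4$ for $(\1,\1)$, $k=2$ for $(-\1,\1)$ and $(-\1,-\1)$, and $k=0$ for $(\1,-\1)$; in the last case the leading $\SS^0$ is the contractible convex component guaranteed by Theorem~\ref{thmani}, whose existence is witnessed by Proposition~\ref{curve gamma 1,-1 is convex}, while the absence of $\SS^0$ in the other three cases reflects their connectedness.

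Finally I would upgrade this homological picture to the asserted weak homotopy equivalence. One constructs a map from the wedge into $\mathcal{L}\SS^3(z_l,z_r)$: the factor $\Omega(\SS^3\times\SS^3)$ enters through a section of the Frenet map over generic curves, and each sphere class is realized by an explicit family of convex curves carrying added loops, generalizing the curves $\gamma_1^m$ of Example~\ref{family1}. Proving this map is a homology isomorphism, and invoking Whitehead's theorem — legitimate because $\Omega(\SS^3\times\SS^3)$ is simply connected and hence so is each non-convex component, the contractible convex component in the $(\1,-\1)$ case being handled separately as the $\SS^0$ — would conclude. The main obstacle is precisely the passage from homology to a genuine wedge: one must establish degeneration of the filtration spectral sequence and, more seriously, that the coupling condition (L) introduces no differential mixing the two loop-adding directions. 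It is this coupling of the locally convex left part with the merely immersed right part that prevents $\mathcal{L}\SS^3(z_l,z_r)$ from being a formal product of two $\SS^2$-factors — indeed it is why the bottom dimension $k$ does not split as a sum of per-side contributions — and makes the vanishing of the attaching maps the crux of the argument; controlling it will likely require the finer Bruhat-cell and globally convex estimates of \cite{AlvSal19} combined with \cite{GouSal19II}.
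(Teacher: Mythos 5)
The statement you are addressing is not proved in the paper at all: it is stated as a \emph{conjecture}, and the author explicitly writes that she \emph{hopes} to prove it using techniques from \cite{GouSal19I}, \cite{GouSal19II} and \cite{AlvSal19} in joint work with Goulart, Saldanha and Shapiro. So there is no proof to compare yours against, and your proposal must stand on its own. It does not: it is a program whose decisive steps are precisely the open content of the conjecture. Everything rests on the claim that the itinerary/Bruhat stratification of $\mathcal{L}\SS^3(z_l,z_r)$ is, up to homotopy, generated by two commuting ``add a pair of loops'' operations, one per $\SS^3$ factor of $\mathrm{Spin}_4$, each raising dimension by $4$ and acting freely on a single bottom class of dimension $k(z_l,z_r)$. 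You introduce this with ``Granting that\dots'', and nothing in this paper or in the cited literature, as used here, establishes it; the Poincar\'e-series bookkeeping $t^k/(1-t^4)^2=\sum_{j\ge 0}(j+1)\,t^{k+4j}$ is then conditional on that unproved input. Likewise the determination of the bottom dimension $k$ from the parities of $(z_l,z_r)$ is asserted rather than derived: the paper itself only proves far weaker statements in this direction (Theorems~\ref{th2} and~\ref{th3}, which characterize or constrain convexity through the left part, plus the existence results of Section~\ref{examples}), and these do not yield the dimensions $4,2,0,2$ you list.

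The second gap you name yourself but do not close: passing from the (conjectural) homology computation to the asserted weak homotopy equivalence requires degeneration of the filtration spectral sequence and vanishing of all attaching maps, i.e., that the coupling condition (L) of Theorem~\ref{th1} does not mix the two loop-adding directions. This is exactly where the difficulty of the problem lies, and deferring it to ``finer Bruhat-cell estimates'' in \cite{AlvSal19} and \cite{GouSal19II} is not an argument. There is also a logical slip in the Whitehead step: from $\pi_1\bigl(\Omega(\SS^3\times\SS^3)\bigr)=\pi_2(\SS^3\times\SS^3)=0$ you conclude that each non-convex component of $\mathcal{L}\SS^3(z_l,z_r)$ is simply connected, but that inference presupposes the Frenet inclusion is already known to be a $\pi_1$-isomorphism on that component, which is part of what is to be proved. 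In short, your outline reproduces the heuristic picture behind the conjecture---and is plausibly close in spirit to the approach the author announces---but it is not a proof; the statement remains open.
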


%%%%%%%%%%%%%%%%%%%%%%%%%%%%%%%%%%%%%%%%%%%%%%%%%%%%%%%%%%%%%%%%%%%%%%%%%%%%%%%%%%%%%%%%%%%%%%%%%%%%%

\bigskip

\parindent=0pt
\parskip=0pt
\obeylines

Em\'ilia Alves 
emiliacstalves@gmail.com 
Instituto de Matem\'atica e Estat\'istica
Universidade Federal Fluminense
Rua Professor Marcos Waldemar de Freitas Reis s/n
24210-201 Niter\'oi, RJ, Brazil

%IME - Universidade Federal Fluminense. Rua Prof. Marcos Waldemar de Freitas
%Reis, S/N. Bloco H, 4 andar. 24.210-201, Gragoatá, Niter\'oi, RJ, Brasil 
%emiliacstalves@puc-rio.br
%Department of Mathematics 
%Catholic University of Rio de Janeiro (PUC)
%22.453 Rio De Janeiro, RJ 
%Brazil
 
%RJ 22451-900, Brazil. 
%Rua Marquês de São Vicente 225, Rio de Janeiro, RJ 22451-900, Brazil

%Department of Mathematics
%Catholic University of Rio de Janeiro (PUC)
%22.453 Rio de Janeiro RJ
%BRAZIL

\end{document}